\documentclass{amsart}
\usepackage{amssymb,amsmath,latexsym,times,tikz,mathtools,enumitem,
hyperref,multirow,xcolor,accents}

\usetikzlibrary{patterns}

\numberwithin{equation}{section}

\hypersetup{colorlinks=true, citecolor=blue, linkcolor=blue}

\newtheorem{theorem}{Theorem}[section]
\newtheorem{definition}[theorem]{Definition}
\newtheorem{corollary}[theorem]{Corollary}
\newtheorem{proposition}[theorem]{Proposition}
\newtheorem{lemma}[theorem]{Lemma}

\newcommand\thickbar[1]{\accentset{\rule{.4em}{.8pt}}{#1}}

\newcommand{\del}{\backslash}
\newcommand{\ba}{\backslash}

\newcommand{\btu}{\bigtriangleup}

\title{The excluded 3-minors for vf-safe delta-matroids}
\author[J.~Bonin]{Joseph E.~Bonin} \address[J.~Bonin]
{Department of Mathematics\\ The George Washington University\\
  Washington, D.C. 20052, USA} \email[J.~Bonin] {jbonin@gwu.edu}
\author[C.~Chun] {Carolyn Chun} \address[C.~Chun]
{Department of Mathematics\\
  United States Naval Academy\\
  Annapolis, MD, 21402, USA} \email[C.~Chun] {chun@usna.edu}
\author[S.~Noble] {Steven D.\ Noble} \address[S.~Noble] {Department of
  Economics, Mathematics and Statistics \\ Birkbeck, University of
  London \\ London WC1E 7HX, United Kingdom} \email[S.~Noble]
{s.noble@bbk.ac.uk} \date{\today}

\begin{document}

\begin{abstract}
  Vf-safe delta-matroids have the desirable property of behaving well
  under certain duality operations.  Several important classes of
  delta-matroids are known to be vf-safe, including the class of
  ribbon-graphic delta-matroids, which is related to the class of
  ribbon graphs or embedded graphs in the same way that graphic
  matroids correspond to graphs.  In this paper, we characterize
  vf-safe delta-matroids and ribbon-graphic delta-matroids by finding
  the minimal obstructions, called excluded 3-minors, to membership in
  the class. We find the unique (up to twisted duality) excluded
  $3$-minor within the class of set systems for the class of vf-safe
  delta-matroids.  In the literature, binary delta-matroids appear in
  many different guises, with appropriate notions of minor operations
  equivalent to that of $3$-minors, perhaps most notably as graphs
  with vertex minors. We give a direct explanation of this equivalence
  and show that some well-known results may be expressed in terms of
  $3$-minors.
\end{abstract}

\maketitle

\section{Introduction}\label{sec:intro}

A \emph{set system} is a pair $S=(E,\mathcal{F})$, where $E$, or
$E(S)$, is a set, called the \emph{ground set}, and $\mathcal{F}$, or
$\mathcal{F}(S)$, is a collection of subsets of $E$.  (All set systems
in this paper have finite ground sets.)  The members of $\mathcal{F}$
are the \emph{feasible sets}.  We say that $S$ is \emph{proper} if
$\mathcal{F}\ne \emptyset$.

A matroid $M$ has many associated set systems with $E=E(M)$.  The most
important of these from the perspective of this paper has
$\mathcal{F}=\mathcal{B}(M)$, the set of bases of $M$.  Recall that
the bases of a matroid satisfy the following exchange property: for
any $B_1,B_2\in\mathcal{B}(M)$ and for each element $x\in B_1-B_2$,
there is a $y\in B_2-B_1$ for which
$B_1\triangle\{x,y\}\in\mathcal{B}(M)$.  To get the definition of a
delta-matroid, replace set differences by symmetric differences.
Thus, as introduced by Bouchet in \cite{ab1}, a \emph{delta-matroid}
is a proper set system $D=(E,\mathcal{F})$ for which $\mathcal{F}$
satisfies the \emph{delta-matroid symmetric exchange axiom}:
\begin{quote}
  (SE) \
  for all triples $(X,Y,u)$ with $X$ and $Y$ in $\mathcal{F}$
  and $u\in X\triangle Y $, there is a $v \in X\triangle Y$ (perhaps
  $u$ itself) such that $X\triangle \{u,v\}$ is in $\mathcal{F}$.
\end{quote}
Clearly every matroid $(E(M),\mathcal{B}(M))$ is a delta-matroid.

Just as there is a mutually-enriching interplay between matroid theory
and graph theory, the theory of delta-matroids has substantial
connections with the theory of embedded graphs or equivalently ribbon
graphs; see~\cite{CMNR,CCMR}.  Brijder and
Hoogeboom~\cite{B+H:group,B+H:interlace,B+H:nullity} introduced the
operation of loop complementation, which we define in the next
paragraph.  This operation is natural for the class of binary
delta-matroids and its subclass of ribbon-graphic delta-matroids.
These classes are closed under loop complementation, but that is not
true for the class of all delta-matroids.  We investigate when loop
complementation of a delta-matroid yields a delta-matroid.

For a set system $S=(E,\mathcal{F})$ and $e\in E$, we define $S+e$ to
be the set system
\begin{equation}\label{eq:loopcomp}
  S+e=(E,\mathcal{F}\btu\{F\cup e:e\notin F\in \mathcal{F}\}).
\end{equation}
(As in matroid theory, we often omit set braces from singletons.)
Note that $(S+e)+e=S$ and that $S+e$ is proper if and only if $S$ is
proper.  It is straightforward to check that if $e_1,e_2\in E$ then
$(S+e_1)+e_2=(S+e_2)+e_1$.  Thus if $X=\{e_1,\ldots,e_n\}$ is a subset
of $E$, then the set system $S+X$ is unambiguously defined by
\begin{equation}\label{eq:loopcomp2}
  S+X = ((S+e_1)+ \cdots)+e_n.
\end{equation}
This operation is called the \emph{loop complementation of $S$ on
  $X$}.  There is a natural operation of embedded graphs, namely
\emph{partial Petriality}, to which loop complementation
corresponds. More precisely if two embedded graphs are partial
Petrials of each other then their ribbon graphic delta-matroids are
related by a loop complementation~\cite[Section~4]{CCMR}.

For a delta-matroid $D$ and element $e\in E(D)$, the set system $D+e$
need not be a delta-matroid.  Consider, for example, the delta-matroid
$D_3=(\{a,b,c\},2^{\{a,b,c\}}-\{\{a,b,c\}\})$.  Then $D_3+\{a,b,c\}$
is the set system $(\{a,b,c\},\{\emptyset ,\{a,b,c\}\})$.  This is not
a delta-matroid.  In fact, it is an excluded minor for the class of
delta-matroids~\cite{BCN1}.

Another operation on delta-matroids is the twist.  For $A\subseteq E$,
the \emph{twist of $S$ on $A$}, which is also called the \emph{partial
  dual of $S$ with respect to $A$}, denoted $S*A$, is given by
\[S*A=(E,\{F\btu A\,:\,F\in\mathcal{F}\}).\]
Note that $(S*A)*A=S$.  The \emph{dual} $S^*$ of $S$ is $S*E$.  In
contrast with loop complementation, each twist of a delta-matroid is a
delta-matroid.  Apart from the dual, the twists of a matroid
$(E(M),\mathcal{B}(M))$ are generally not matroids, as discussed
in~\cite[Theorem~3.4]{CCMR}.

Two set systems are said to be \emph{twisted duals} of one another if
one may be obtained from the other by a sequence of twists and loop
complementations.  Following~\cite{B+H:nullity}, a delta-matroid is
said to be \emph{vf-safe} if all of its twisted duals are
delta-matroids.  (The term vf-safe is short for `vertex-flip
safe'. Both of the terms vf-safe and loop complementation are named
for operations on graphs representing binary
delta-matroids~\cite{B+H:group}, which we discuss in
Section~\ref{sec:vertexminor}.)

Delta-matroids belonging to certain important classes are known to be
vf-safe. In fact, every twisted dual of a ribbon-graphic delta-matroid
is a ribbon-graphic
delta-matroid~\cite[Theorem~2.1,Theorem~4.1]{CCMR}, and every twisted
dual of a binary delta-matroid is a binary
delta-matroid~\cite[Theorem~8.2]{B+H:nullity}.  Brijder and Hoogeboom
showed that quaternary matroids are vf-safe~\cite{BH}, although, as
mentioned earlier, matroids are not closed under twists.

In the main result of this paper, Theorem \ref{thm:excl3minvfs}, we
identify $D_3$ as essentially the unique obstacle for a delta-matroid
to be vf-safe.  More precisely, we show that the excluded
$3$-minors for the class of vf-safe delta-matroids within the class of
set systems comprise the 28 set systems that are the twisted duals of
$D_3$.  These set systems are given in Tables~\ref{t1}--\ref{t6}.  In
the final section of the paper, we relate $3$-minors to other minor
operations that have appeared in the literature, and we apply Theorem
\ref{thm:excl3minvfs} to recast some known results using short lists
of excluded $3$-minors.

\section{Background}

Let $S=(E,\mathcal{F})$ be a proper set system.  An element $e\in E$
is a \emph{loop} of $S$ if no set in $\mathcal{F}$ contains $e$.  If
$e$ is in every set in $\mathcal{F}$, then $e$ is a \emph{coloop}.  If
$e$ is not a loop, then the \emph{contraction of $e$ from $S$},
written $S/e$, is given by
\[S/e = (E-e, \{F-e:e\in F\in\mathcal{F}\}).\]
If $e$ is not a coloop, then the \emph{deletion of $e$ from $S$},
written $S\ba e$, is given by
\[S\ba e = (E-e,\{F\subseteq E-e:F\in\mathcal{F}\}).\]
If $e$ is a loop or a coloop, then one of $S/e$ and $S\ba e$ has
already been defined, so we can set $S/e=S\ba e$.  Any sequence of
deletions and contractions, starting from $S$, gives another set
system $S'$, called a \emph{minor} of $S$.  Each minor of $S$ is a
proper set system.

The order in which elements are deleted or contracted can matter.
See~\cite{BCN1} for an example.  However, for disjoint subsets $X$ and
$Y$ of $E$, if some set in $\mathcal{F}$ is disjoint from $X$ and
contains $Y$, then the deletions and contractions in $S\ba X/Y$ can be
done in any order, and
\[S\ba X/Y=(E-(X\cup Y),\{F-Y\,:\,F\in\mathcal{F} \text{ and
}Y\subseteq F\subseteq E-X\}).\]
The following lemma, which is \cite[Lemma~2.1]{BCN1}, shows that all
minors of a proper set system are of this type.

\begin{lemma}\label{minorisminor}
  For any minor $S'$ of a proper set system
  $S=(E,\mathcal{F})$, there are disjoint subsets $X$ and $Y$ of $E$
  such that
  \[S'=S\del X/Y=(E-(X\cup Y),\{F-Y\,:\,F\in\mathcal{F} \text{ and
  }Y\subseteq F\subseteq E-X\}).\]
\end{lemma}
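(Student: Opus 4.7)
My plan is induction on the number $n$ of deletion/contraction steps used to obtain $S'$ from $S$. The base case $n=0$ is immediate: take $X = Y = \emptyset$, and the right-hand side collapses to $(E,\{F - \emptyset : F\in\mathcal{F}\}) = S$.

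For the inductive step, write $S' = T \star e$ with $T$ obtained from $S$ in $n-1$ steps and $\star \in \{\setminus, /\}$. By the inductive hypothesis there are disjoint $X, Y \subseteq E$ with
\[ T = (E - (X \cup Y),\ \mathcal{G}), \qquad \mathcal{G} = \{F - Y : F \in \mathcal{F},\ Y \subseteq F \subseteq E - X\}, \]
and $\mathcal{G} \ne \emptyset$ since $T$ is proper; also $e \in E - (X \cup Y)$. I would treat two principal cases; the loop and coloop situations reduce to these via the convention $T \setminus e = T/e$. Case 1: $\star = \setminus$ and $e$ is not a coloop of $T$. I claim $S' = S \setminus (X \cup \{e\})/Y$. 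The non-coloop hypothesis says some member of $\mathcal{G}$ avoids $e$, equivalently some $F \in \mathcal{F}$ satisfies $Y \subseteq F \subseteq E - (X \cup \{e\})$, so the proposed pair $(X \cup \{e\}, Y)$ is disjoint and yields a proper set system. Unwinding the definitions, a feasible set of $T \setminus e$ has the form $F - Y$ with $F \in \mathcal{F}$, $Y \subseteq F \subseteq E - X$, and $e \notin F$, which is exactly the description on the right-hand side. Case 2: $\star = /$ and $e$ is not a loop of $T$. The mirror argument gives $S' = S \setminus X / (Y \cup \{e\})$, with the non-loop condition supplying some $F \in \mathcal{F}$ satisfying $Y \cup \{e\} \subseteq F \subseteq E - X$.

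To finish, I would observe that if $\star = \setminus$ and $e$ is a coloop of $T$, then every member of $\mathcal{G}$ contains $e$, so $e$ is not a loop of $T$ (since $\mathcal{G} \ne \emptyset$) and $T \setminus e = T/e$ falls under Case 2; the situation $\star = /$ with $e$ a loop of $T$ is symmetric and reduces to Case 1. The main obstacle in this argument is purely bookkeeping: one has to translate the statement ``$e$ is a loop/coloop of $T$'' back into a condition on which $F \in \mathcal{F}$ meet $Y \subseteq F \subseteq E - X$, and then verify the feasible-sets identity in each case. Once that translation is made carefully in one case, the others are routine by the loop/coloop-collapse convention or by interchanging the roles of $X$ and $Y$.
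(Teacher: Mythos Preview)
Your induction argument is correct. The base case and the two principal cases are handled cleanly, and your reduction of the loop/coloop situations to Cases~1 and~2 via the convention $T\setminus e = T/e$ is the right way to close the loop; the key observation that $\mathcal{G}\ne\emptyset$ rules out $e$ being simultaneously a loop and a coloop is exactly what is needed there.

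As for comparison: the paper does not actually prove this lemma. It is quoted as \cite[Lemma~2.1]{BCN1} and stated without proof, so there is no in-paper argument to compare against. Your proof is the natural one and is essentially what one would expect the cited source to contain: induct on the length of the defining sequence, and at each step absorb the new element into either $X$ or $Y$ according to whether it is deleted or contracted, using the loop/coloop convention to ensure the sandwich condition $Y\subseteq F\subseteq E-X$ is maintained for some $F\in\mathcal{F}$.
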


Bouchet and Duchamp~\cite{bou:rep} showed that, if $S$ is a
delta-matroid, $X$ and $Y$ are any disjoint subsets of $E$, and
$S'=S\ba X/Y$, then $S'$ is a delta-matroid and $S'$ is independent of
the order of the deletions and contractions.

The following definition from~\cite{B+H:group} is equivalent to that
given in equations (\ref{eq:loopcomp})--(\ref{eq:loopcomp2}).
Equivalence can be shown by a routine induction on $|A|$.

\begin{definition}\label{def:loopcomp} If $S=(E,\mathcal F)$ is a set
  system and $A$ is a subset of $E$, then the \emph{loop
    complementation of $S$ by $A$}, denoted by $S+A$, is the set
  system on $E$ such that $F$ is feasible in $S+A$ if and only if $S$
  has an odd number of feasible sets $F'$ with
  $F-A \subseteq F' \subseteq F$.
\end{definition}

Note that if $A=\{e\}$, then the feasible sets of $S+e$ that do not
contain $e$ are the same as those of $S$, and a set $F$ containing $e$
is feasible in $S+e$ if and only if one but not both of $F$ and $F-e$
is feasible in $S$.  That is, so long as $e$ is not a loop or coloop,
$$\mathcal{F}(S+e)=\mathcal{F}(S\ba e)\cup \{F\cup
e:F\in\mathcal{F}(S\ba e)\btu\mathcal{F}(S/e)\}.$$
If $e$ is a loop, then
$\mathcal{F}(S+e)=\mathcal{F}\cup \{F\cup e:F\in\mathcal{F}\}$.  If
$e$ is a coloop, then $S+e=S$.

The twist and loop complementation operations interact in the
following way. If $A$ and $B$ are disjoint subsets of $E$ then
$(S+A)*B=(S*B)+A$ (a two-element case check and routine induction
suffice to verify this), but in general $(S*A)+A \ne (S+A)*A$. However
$((S+A)*A)+A = ((S*A)+A)*A$ (see~\cite{B+H:group}).  It follows that
there are at most six twisted duals of $S$ with respect to a fixed set
$A$.  These relations ensure that any twisted dual of $S$ is of the
form $((S*X)+Y)*Z$ for suitably chosen subsets $X$, $Y$ and $Z$ of $E$
with $X \subseteq Z$.  The first relation is used in the proof of the
following result.

\begin{lemma}\label{reordering}
  Let $S=(E,\mathcal F)$ be a proper set system, and let $a$ and $b$
  be distinct elements of $E$.  Then
  \begin{enumerate}[label=\emph{(\roman*)}]
  \item $S+a\ba a=S\ba a$,
  \item $S+a\ba b = S\ba b+a$, and
  \item $S+a/b = S/b+a$.
  \end{enumerate}
\end{lemma}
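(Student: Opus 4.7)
The plan is to verify each identity by direct computation from the defining formula~(\ref{eq:loopcomp}) for loop complementation together with the elementary formulas for deletion and contraction. A preliminary observation, handled by a quick case check, is that for distinct elements $a,b\in E$, an element $b$ is a loop (respectively coloop) of $S+a$ if and only if it is a loop (respectively coloop) of $S$, and $a$ itself is a coloop of $S+a$ if and only if $a$ is a coloop of $S$. This guarantees that both sides of each identity are defined under the same hypotheses.

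For part (i), the sets in
\[\mathcal{F}(S+a)=\mathcal{F}\btu\{F\cup a:a\notin F\in\mathcal{F}\}\]
that do not contain $a$ are exactly the sets in $\mathcal{F}$ that do not contain $a$, since every member of the second collection contains $a$. This collection is $\mathcal{F}(S\ba a)$, so $\mathcal{F}((S+a)\ba a)=\mathcal{F}(S\ba a)$. The degenerate cases (where $a$ is a loop or coloop of $S$, so one of the deletions is re-read as a contraction) are handled by direct inspection using the remarks following Definition~\ref{def:loopcomp}; in particular, if $a$ is a coloop of $S$ then $S+a=S$, so there is nothing to prove.

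For part (ii), because $a\neq b$, a set $F\cup a$ in the second collection contains $b$ if and only if $F$ does. Consequently, restricting $\mathcal{F}(S+a)$ to the sets not containing $b$ gives
\[\mathcal{F}(S\ba b)\btu\{F\cup a:a\notin F\in\mathcal{F}(S\ba b)\},\]
which is exactly $\mathcal{F}((S\ba b)+a)$. For part (iii), a parallel computation identifies $\mathcal{F}((S+a)/b)=\{F-b:b\in F\in\mathcal{F}(S+a)\}$ with $\mathcal{F}((S/b)+a)$ after a short case split on whether $a\in F$: when $a\notin F$, membership in $\mathcal{F}(S+a)$ reduces to membership in $\mathcal{F}$; when $a\in F$, the parity characterization in Definition~\ref{def:loopcomp} gives that $F$ is feasible in $S+a$ iff exactly one of $F$ and $F-a$ lies in $\mathcal{F}$, which translates after removing $b$ into the analogous parity condition defining $(S/b)+a$.

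The main obstacle is none of these identities individually but rather the bookkeeping of boundary cases where $a$ or $b$ is a loop or coloop, so that every occurrence of $\ba$ and $/$ is legitimate and the convention $S\ba e=S/e$ for loops/coloops is respected. Once this is settled, each identity follows from reading off the feasible sets on both sides and checking they agree.
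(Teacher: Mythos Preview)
Your proof is correct. For parts (i) and (ii) it is essentially the paper's argument: both check that the loop/coloop status of $b$ (and the coloop status of $a$) is preserved under loop complementation at $a$, and then read off the feasible sets on each side. The paper phrases (ii) using the parity description of Definition~\ref{def:loopcomp}, while you work directly from formula~(\ref{eq:loopcomp}), but these are the same computation.

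The one genuine difference is part (iii). You repeat the direct computation, splitting on whether $a\in F$ and matching the parity conditions on both sides. The paper instead reduces (iii) to (ii) in one line via the identities $S'/e=(S'*e)\ba e$ and $(S+a)*b=(S*b)+a$ (the latter holding because $a\ne b$):
\[S+a/b=((S+a)*b)\ba b=((S*b)+a)\ba b=((S*b)\ba b)+a=S/b+a.\]
Your approach is self-contained and avoids invoking the twist; the paper's reduction is shorter and exhibits (iii) as a formal consequence of (ii), which is arguably more illuminating since it shows the contraction identity is not an independent fact.
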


\begin{proof}
  If $a$ is a coloop of $S$, then $S+a=S$, so statement (i) follows.
  Also, $a$ is not a coloop of $S$ if and only if it is not a coloop
  of $S+a$, in which case the feasible sets avoiding $a$ are the same
  in $S$ and $S+a$ by the definition.

  For statement (ii), observe that $b$ is a coloop of $S+a$ if and
  only if it is a coloop of $S$.  When $b$ is not a coloop of $S$,
  statement (ii) holds since for each side, the feasible sets are the
  sets $F$ with $b\not\in F$ for which an odd number of the sets $X$
  with $F-a\subseteq X\subseteq F$ are in $\mathcal F$.  When $b$ is a
  coloop of $S$, we need to show that $S+a/b = S/b+a$.  This holds
  since for each side, the feasible sets are the sets $F$ with
  $b\not\in F$ for which an odd number of the sets $X$ with
  $(F-a)\cup b\subseteq X\subseteq F\cup b$ are in $\mathcal F$.

  It is easy to check that $S'/e=S'*e\del e$, so, using statement
  (ii), we get statement (iii):
  \[S+a/b=((S+a)*b)\del b=((S*b)+a)\del b=((S*b)\del
  b)+a=S/b+a.\qedhere\]
\end{proof}

The counterpart, for contractions, of statement (i) is false, as
taking $S=D_3$ shows.

\section{$3$-minors}\label{3minorsection}
We introduce a third minor operation on set systems.  For a proper set
system $S$, we define $S\ddagger e$ to be the set system
$(S+e)/e$. This minor operation was first defined by Ellis--Monaghan
and Moffatt~\cite{E+M:Penrose} for ribbon graphs and in an equivalent
way by Brijder and Hoogeboom~\cite{B+H:interlace} for
delta-matroids. The notation $\ddagger$ is new, but it seems
appropriate to shorten the unwieldy $+e/e$ notation. Motivation for
this definition comes from two directions. First, one way to define
the Penrose polynomial of a ribbon graph is by specifying a recursive
relation analogous to the deletion-contraction recurrence of the
chromatic polynomial with this minor operation replacing
contraction. For this reason, following a suggestion of Iain
Moffatt~\cite{Moffatt:private}, we propose calling the operation
\emph{Penrose contraction}. Second, there is a class of combinatorial
objects called multimatroids~\cite{mmi,mmii,mmiii}, of which tight
3-matroids are a particular subclass.  Brijder and
Hoogeboom~\cite{B+H:interlace} showed that tight 3-matroids are
equivalent (in a sense that we do not make precise here) to vf-safe
delta-matroids. Tight 3-matroids have three minor operations
corresponding to deletion, contraction, and Penrose contraction in
vf-safe delta-matroids.

Any sequence of the three minor operations, starting from $S$, gives
another proper set system $S'$, called a \emph{$3$-minor} of $S$.  A
collection $\mathcal C$ of proper set systems is \emph{$3$-minor
  closed} if every $3$-minor of every member of $\mathcal C$ is in
$\mathcal C$.  Given such a collection $\mathcal C$, a proper set
system $S$ is an \emph{excluded $3$-minor} for $\mathcal{C}$ if
$S\notin\mathcal{C}$ and all other $3$-minors of $S$ are in
$\mathcal{C}$.  A proper set system belongs to $\mathcal C$ if and
only if none of its $3$-minors is an excluded $3$-minor for
$\mathcal C$.  Thus, the excluded $3$-minors determine $\mathcal C$;
they are the $3$-minor-minimal obstructions to membership in
$\mathcal{C}$.

For a given class $\mathcal{C}$, there may be substantially fewer
excluded $3$-minors than excluded minors.  For instance,
in~\cite{oumg}, Geelen and Oum found 166 delta-matroids that, up to
twists, are the excluded minors for ribbon-graphic delta-matroids
within the class of binary delta-matroids.  In contrast, in
Theorem~\ref{thm:rg3min}, we show that every binary matroid that does
not have a twisted dual of one of three delta-matroids as a $3$-minor
is ribbon-graphic.

An element $e$ is called a \emph{pseudo-loop} of $S$ if $e$ is a loop
of $S + e$. The next lemma characterizes these elements.

\begin{lemma}\label{collection}
  For an element $e$ in a proper set system $S$, the following
  statements are equivalent:
  \begin{enumerate}
  \item[\emph{(i)}] $e$ is a loop of $S+e$, that is, a pseudo-loop of
    $S$,
  \item[\emph{(ii)}] $F\cup e\in\mathcal{F}(S)$ if and only if
    $F\in\mathcal{F}(S)$, and
  \item[\emph{(iii)}] $S*e=S$.
  \end{enumerate}
  Pseudo-loops of $S$ are neither loops nor coloops of $S$.
  Furthermore, $S\ddagger e=S \setminus e = S/e$ if and only if $e$ is
  a loop, a coloop, or a pseudo-loop of $S$.
\end{lemma}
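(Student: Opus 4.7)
The plan is to first establish the three-way equivalence of (i)--(iii), then observe that (ii) precludes $e$ from being a loop or coloop, and finally analyze $S \ddagger e$ case by case. For (i)$\Leftrightarrow$(ii), I would unpack the description of $\mathcal F(S+e)$ recorded just above the lemma: for $F$ with $e \in F$, $F \in \mathcal F(S+e)$ iff exactly one of $F$ and $F-e$ is in $\mathcal F(S)$. So $e$ is a loop of $S+e$ iff no such $F$ exists, which is exactly (ii). For (ii)$\Leftrightarrow$(iii), the involution $F \mapsto F \triangle e$ sends $\mathcal F(S)$ bijectively onto $\mathcal F(S*e)$, so $S*e = S$ iff this involution permutes $\mathcal F(S)$ within itself, which is precisely (ii).

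For the assertion that pseudo-loops are neither loops nor coloops, pick any $F \in \mathcal F(S)$, which exists since $S$ is proper. If $e$ were a loop, then $F$ would avoid $e$ while $F \cup e \notin \mathcal F(S)$, violating (ii); if $e$ were a coloop, then $F$ would contain $e$ while $F - e \notin \mathcal F(S)$, violating (ii) applied to $F-e$.

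For the final biconditional, I would compute $(S+e)/e$ in each of the three cases. If $e$ is a loop of $S$, the formula $\mathcal F(S+e) = \mathcal F(S) \cup \{F \cup e : F \in \mathcal F(S)\}$ shows $e$ is neither a loop nor a coloop of $S+e$, so the usual contraction gives $\mathcal F((S+e)/e) = \mathcal F(S)$, which agrees with $\mathcal F(S \setminus e)$ and with $\mathcal F(S/e)$ under the loop convention. If $e$ is a coloop of $S$, then $S+e = S$, so $S \ddagger e = S/e = S \setminus e$ by the coloop convention. If $e$ is a pseudo-loop, then $e$ is a loop of $S+e$, so by convention $(S+e)/e = (S+e) \setminus e$; the formula for $\mathcal F(S+e)$ with $e$ non-loop and non-coloop shows $\mathcal F((S+e) \setminus e) = \mathcal F(S \setminus e)$, and (ii) yields $\mathcal F(S \setminus e) = \mathcal F(S/e)$. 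For the converse, suppose $S \ddagger e = S \setminus e = S/e$ with $e$ neither a loop nor a coloop of $S$; comparing the descriptions of the feasible sets of $S \setminus e$ and $S/e$ on $E-e$ gives $F \in \mathcal F(S) \Leftrightarrow F \cup e \in \mathcal F(S)$ for every $F \subseteq E-e$, which is (ii), so $e$ is a pseudo-loop.

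The main obstacle is keeping the loop/coloop bookkeeping straight: one must track the status of $e$ both in $S$ and in $S+e$, because the convention $S/e = S \setminus e$ for loops and coloops alters the definition of $S \ddagger e = (S+e)/e$, and the flip in the loop status of $e$ between $S$ and $S+e$ is precisely what distinguishes the three cases of the last statement.
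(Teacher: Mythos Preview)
Your proposal is correct and follows essentially the same route as the paper's proof. The paper dispatches the equivalence of (i)--(iii) and the ``neither loop nor coloop'' claim in one line each (``immediate from the definitions'' and ``Statement (ii) implies\ldots''), whereas you spell these out; the case analysis for the final biconditional, including the use of $S\setminus e\ne S/e$ from the failure of (ii) for the converse, is the same as the paper's.
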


\begin{proof}
  The equivalence of statements (i)--(iii) is immediate from the
  definitions. Statement (ii) implies that pseudo-loops are neither
  loops nor coloops.  If $e$ is a loop of $S$, then
  $S\ddagger e=S \setminus e$ since
  $\mathcal{F}(S+e)=\mathcal{F}(S)\cup \{F\cup
  e:F\in\mathcal{F}(S)\}$;
  also, $S \setminus e = S/e$ by definition.  If $e$ is a coloop of
  $S$, then $S\ddagger e=S / e$ since $S+e=S$; also,
  $S \setminus e = S/e$ by definition.  If $e$ is a pseudo-loop of
  $S$, then statements (i) and (ii) gives the equality.  If $e$ is not
  a loop, a coloop, or a pseudo-loop of $S$, then
  $S \setminus e \ne S/e$ by the failure of statement (ii) and the
  fact that some, but not all, sets in $\mathcal{F}(S)$ contain $e$.
\end{proof}

The following two results show that one may choose the operations used
to form a $3$-minor in such a way that they commute.

\begin{lemma}\label{3point2}
  Let $S=(E,\mathcal F)$ be a proper set system, and let $X,Y$, and
  $Z$ be pairwise disjoint subsets of $E$.  If there is a set $F$ with
  \begin{equation}\label{eq:3minornice2} F \subseteq E-(X\cup Y \cup
    Z)
    \quad \text{and} \quad |\mathcal F \cap \{F': F\cup Y \subseteq F'
    \subseteq F \cup Y\cup Z\}| \text{ is odd},
  \end{equation}
  then the minor operations in $S\setminus X/Y\ddagger Z$ can be done
  in any order and a set $F$ is feasible in $S\setminus X/Y\ddagger Z$
  if and only if it satisfies Condition~\eqref{eq:3minornice2}.
\end{lemma}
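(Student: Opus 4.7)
The plan is to establish the identity
\[
S \setminus X / Y \ddagger Z = (S+Z) \setminus X / (Y \cup Z),
\]
for any ordering of the operations on the left, and then apply Lemma~\ref{minorisminor} to the right-hand side to obtain both conclusions simultaneously.

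For the identity, I would expand each $\ddagger z$ in a given ordering as $+z$ followed by $/z$, turning the sequence into a mix of $\setminus$, $/$, and $+$ operations. By Lemma~\ref{reordering}(ii)--(iii), each $+z$ commutes with every $\setminus b$ and $/b$ for $b \ne z$; and, as noted in Section~\ref{sec:intro}, loop complementations with distinct underlying elements commute. Applying these unconditional identities repeatedly, every $+z$ can be moved leftward past all earlier operations in the sequence, while its matched $/z$, which sits immediately to its right after expansion, is never crossed. The resulting sequence places all loop complementations at the start, producing $S+Z$ unambiguously, followed by deletions and contractions that together delete $X$ and contract $Y \cup Z$.

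Next, I would verify that the hypothesis activates Lemma~\ref{minorisminor} on $S+Z$ with the disjoint subsets $X$ and $Y \cup Z$. By Definition~\ref{def:loopcomp}, $F \cup Y \cup Z \in \mathcal{F}(S+Z)$ iff the number of $F' \in \mathcal{F}$ with $(F \cup Y \cup Z) - Z \subseteq F' \subseteq F \cup Y \cup Z$ is odd; since $F$, $Y$, $Z$ are pairwise disjoint, $(F \cup Y \cup Z) - Z = F \cup Y$, so this is precisely Condition~\eqref{eq:3minornice2}. The hypothesis therefore produces a feasible set of $S+Z$ that contains $Y \cup Z$ and is disjoint from $X$, so Lemma~\ref{minorisminor} applies: the deletion and contraction operations on $S+Z$ may be performed in any order, and a set $F \subseteq E-(X \cup Y \cup Z)$ is feasible in $(S+Z)\setminus X /(Y\cup Z)$ iff $F \cup Y \cup Z \in \mathcal{F}(S+Z)$, which the same Definition~\ref{def:loopcomp} calculation shows is precisely Condition~\eqref{eq:3minornice2}. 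Combined with the previous paragraph, this yields the whole lemma.

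The main obstacle is the bookkeeping in the commutation step: one must confirm that every swap invokes Lemma~\ref{reordering} for two operations on distinct elements and that the pairing of each $+z$ with its matched $/z$ survives the reordering. Both facts follow because $+z$ only moves leftward, past earlier operations whose underlying elements lie in $X \cup Y \cup (Z - z)$ or are loop complementations for other elements; the matching $/z$ to its right is never crossed, and no $+z$ ever sits between two operations on the same element. Once this is in place, everything reduces cleanly to Lemma~\ref{minorisminor}.
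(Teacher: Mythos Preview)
Your proposal is correct and follows essentially the same approach as the paper: both reduce to $(S+Z)\setminus X/(Y\cup Z)$ via the commutation identities of Lemma~\ref{reordering}, translate Condition~\eqref{eq:3minornice2} into feasibility of $F\cup Y\cup Z$ in $S+Z$ via Definition~\ref{def:loopcomp}, and then invoke the order-independence of deletion/contraction. The only cosmetic differences are that the paper phrases the commutation as ``deferring'' each $+z$ rightward to sit just before its $/z$ (starting from $(S+Z)\setminus X/(Y\cup Z)$) rather than pushing it leftward as you do, and that the order-independence fact you cite is actually the remark \emph{preceding} Lemma~\ref{minorisminor} rather than the lemma itself.
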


\begin{proof}
  A set $F$ meets Condition~\eqref{eq:3minornice2} if and only if
  $F\subseteq E-(X\cup Y \cup Z)$ and $F\cup Y\cup Z$ is in
  $\mathcal{F}(S+Z)$. If there is at least one set satisfying
  Condition~\eqref{eq:3minornice2}, the remarks preceding
  Lemma~\ref{minorisminor} imply that the deletions and contractions
  in forming $(S+Z)\setminus X/(Y\cup Z)$ from $S+Z$ may be done in
  any order and a set $F$ is feasible in $(S+Z)\setminus X/(Y\cup Z)$
  if and only if it satisfies
  Condition~\eqref{eq:3minornice2}. Lemma~\ref{reordering} implies
  that we may defer taking a loop complementation of an element in $Z$
  until just before it is contracted. The result follows.
\end{proof}

We next show that for every $3$-minor of a proper set system, there
are pairwise disjoint sets $X$, $Y$ and $Z$ satisfying
Condition~\eqref{eq:3minornice2}.

\begin{lemma}\label{xyzlemma}
  Let $S'$ be a $3$-minor of a proper set system $S=(E,\mathcal F)$.
  Then there are pairwise disjoint subsets $X$, $Y$, and $Z$ of $E$
  such that $S'=S\setminus X/Y\ddagger Z$ and there is a set $F$
  satisfying Condition~\eqref{eq:3minornice2}.
\end{lemma}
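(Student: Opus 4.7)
The plan is to argue by induction on the length $n$ of a sequence of minor operations producing $S'$ from $S$. In the base case $n=0$, taking $X=Y=Z=\emptyset$ and $F$ to be any feasible set of $S$ (which exists since $S$ is proper) reduces Condition~\eqref{eq:3minornice2} to $F\in\mathcal F$, which holds.

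For the inductive step, suppose that $S'=S\setminus X/Y\ddagger Z$ satisfies Condition~\eqref{eq:3minornice2} with witness $F_0$, and let $S''$ be obtained from $S'$ by applying one further operation involving $e\in E(S')=E-(X\cup Y\cup Z)$. In each case below, I would exhibit a witness $F_1$ for an updated triple $(X',Y',Z')$; Lemma~\ref{3point2} would then both permit reordering of the operations in $S\setminus X'/Y'\ddagger Z'$ and show that the resulting set system equals $S''$.

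For the deletion $\setminus e$, with $e$ not a coloop of $S'$, take $F_1$ to be any feasible set of $S'$ not containing $e$; by Lemma~\ref{3point2} the count condition for $(X\cup e,Y,Z)$ with $F_1$ is literally the statement that $F_1$ is feasible in $S'$, so it is satisfied. For the contraction $/e$, with $e$ not a loop of $S'$, choose a feasible set $F_0'$ of $S'$ containing $e$ and set $F_1=F_0'-e$; since $F_1\cup(Y\cup e)=F_0'\cup Y$ and $F_1\cup(Y\cup e)\cup Z=F_0'\cup Y\cup Z$, the count condition for $(X,Y\cup e,Z)$ with $F_1$ coincides with the one that made $F_0'$ feasible in $S'$. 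The boundary situations---$e$ a coloop of $S'$ when deleting, or $e$ a loop of $S'$ when contracting---fold into the other operation because deletion and contraction coincide there.

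The main obstacle is the Penrose contraction $\ddagger e$. Here I would pick any feasible set $F_1$ of the proper set system $S'\ddagger e$, so $e\notin F_1$. The target collection for $(X,Y,Z\cup e)$, namely $\{F'':F_1\cup Y\subseteq F''\subseteq F_1\cup Y\cup Z\cup e\}$, splits into two disjoint pieces according to whether $F''$ contains $e$. By Lemma~\ref{3point2} applied to $S'$, the cardinalities of the intersections of these two pieces with $\mathcal F$ detect, modulo $2$, feasibility in $S'$ of $F_1$ and of $F_1\cup e$ respectively. Unpacking $S'\ddagger e=(S'+e)/e$ together with the description of $+e$ given in the text, feasibility of $F_1$ in $S'\ddagger e$ amounts to exactly one of $F_1$ and $F_1\cup e$ being feasible in $S'$. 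Therefore the two counts have opposite parities, their sum is odd, and Condition~\eqref{eq:3minornice2} holds with witness $F_1$ for the triple $(X,Y,Z\cup e)$.
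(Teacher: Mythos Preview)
Your inductive framework is sound and close in spirit to the paper's argument, but the Penrose contraction step has a real gap. Your claim that ``feasibility of $F_1$ in $S'\ddagger e$ amounts to exactly one of $F_1$ and $F_1\cup e$ being feasible in $S'$'' relies on $e$ not being a loop of $S'+e$, so that $(S'+e)/e$ has its non-degenerate meaning. When $e$ is a pseudo-loop of $S'$ (equivalently, a loop of $S'+e$), this fails: by Lemma~\ref{collection} we have $S'\ddagger e=S'\setminus e$, and for any $F_1$ feasible in $S'\ddagger e$ both $F_1$ and $F_1\cup e$ are feasible in $S'$. Your two parity counts are then equal, their sum is even, and no witness exists for the triple $(X,Y,Z\cup e)$. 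Indeed, one can check directly that in this situation Condition~\eqref{eq:3minornice2} is never satisfied for $(X,Y,Z\cup e)$, so the triple itself is wrong, not merely the choice of witness.

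The remedy is the one the paper uses: when $e$ is a pseudo-loop of $S'$, place $e$ in $X$ rather than $Z$, invoking $S'\ddagger e=S'\setminus e$ from Lemma~\ref{collection} and then running your already-correct deletion argument. The paper's proof builds this into its definition of $X$ from the outset (condition~(1) there catches loops and pseudo-loops regardless of which operation was nominally applied), which is why its version of the argument goes through without a separate case analysis at this point. Once you add this one case, your induction is complete.
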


\begin{proof}
  Suppose we get $S'$ from $S$ by, for each of $e_1,e_2,\ldots,e_k$ in
  turn, performing one the three minor operations, giving the sequence
  of minors $S_0=S,S_1,\ldots,S_k=S'$.  Let $X$ be the set of elements
  $e_i$ in $\{e_1,\ldots,e_k\}$ that satisfy at least one of the
  following conditions:
  \begin{enumerate}
  \item $e_i$ is a loop or a pseudo-loop of $S_{i-1}$, so
    $S_i=S_{i-1}\setminus e_i$, or
  \item $e_i$ is not a coloop of $S_{i-1}$ and
    $S_i=S_{i-1}\setminus e_i$.
  \end{enumerate}
  Let $Y$ be the set of elements $e_i$ in $\{e_1,\ldots,e_k\}-X$ such
  that $e_i$ is either a coloop of $S_{i-1}$ or
  $S_i=S_{i-1}/e_i$. Note that if $e_i\in Y$ then it is not a loop in
  $S_{i-1}$. Finally let $Z=\{e_1,\ldots,e_k\}-(X\cup Y)$, so that $Z$
  comprises the elements $e_i$ in $\{e_1,\ldots,e_k\}$ for which
  $S_i=S_{i-1}\ddagger e_i$ but $e_i$ is not a loop, pseudo-loop or
  coloop. Then there is always at least one set $F$ satisfying
  Condition~\eqref{eq:3minornice2}.
\end{proof}

It follows from the definition of minors in a multimatroid~\cite{mmii}
and the equivalence of tight $3$-matroids and vf-safe
delta-matroids~\cite{B+H:interlace}, that if $S$ is a vf-safe
delta-matroid, then changing the order of a sequence of $3$-minor
operations on $S$ never affects the result.

Table~\ref{tab:minors} shows the result of applying one of the three
minor operations that remove $e$ after taking one of the six twisted
duals, with respect to $e$, of a proper set system.  If instead the
minor operation removes a different element from that used for the
twisted dual, then these operations commute.

\begin{table}
\begin{tabular}{c|ccc} & $/e$ & $\setminus e$ & $\ddagger e$ \\
\hline
$S$\rule{0pt}{11pt}  &   $S/e$  &$S\setminus e$  & $S\ddagger e$\\
$S*e$&   $S\setminus e$     &$S/e$  &$S\ddagger e$ \\
$S+e$& $S\ddagger e$       &$S\setminus e$  &$S/e$ \\
$(S+e)*e$&$S\setminus e$       & $S\ddagger e$ &$S/e$ \\
$(S*e)+e$& $S\ddagger e$      &$S/e$  &$S\setminus e$ \\
$((S*e)+e)*e$&$S/e$       &$S\ddagger e$  &$S\setminus e$
\end{tabular}
\caption{Interaction of minor operations and twisted
  duality. \protect\rule{0pt}{11pt}\label{tab:minors}}
\end{table}

We next show that any $3$-minor of a twisted dual of a proper set
system $S$ is a twisted dual of some $3$-minor of $S$.  It is easy to
see that the converse is also true.

\begin{lemma}\label{lem:3minorsdualsstuff}
  Suppose $S$ is a proper set system and $S'$ is a twisted dual of
  $S$.  If $S''$ is a $3$-minor of $S'$, then $S$ has a $3$-minor that
  is a twisted dual of $S''$.
\end{lemma}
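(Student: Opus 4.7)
The plan is by induction on the number $k$ of minor operations in a sequence producing $S''$ from $S'$. The base case $k=0$ is immediate, since then $S''=S'$ and $S$ is its own ($k=0$) $3$-minor. For the inductive step, write $S''=S_1\circ e$ with $\circ\in\{\setminus,/,\ddagger\}$, where $S_1$ is a $3$-minor of $S'$ reached by $k-1$ operations. The inductive hypothesis yields a $3$-minor $T$ of $S$ that is a twisted dual of $S_1$; since $3$-minors of $3$-minors are $3$-minors, it suffices to prove the single-operation statement: if $S'$ is a twisted dual of $S$ and $S''=S'\circ e$, then $S$ has a $3$-minor that is a twisted dual of $S''$.

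For the single-operation case, I would invoke the normal form noted before Lemma~\ref{reordering} to write $S'=((S*X)+Y)*Z$ with $X\subseteq Z$. The idea is to push every twist and loop complementation whose defining set avoids $e$ past $\circ e$, leaving an ``$e$-only'' twisted dual of $S$ to which Table~\ref{tab:minors} can be applied directly. This relies on the commutation identities $(T*A)\circ e=(T\circ e)*A$ and $(T+A)\circ e=(T\circ e)+A$ for $e\notin A$ and $\circ\in\{\setminus,/,\ddagger\}$. The twist identities drop out of the definition, since $e\in F$ iff $e\in F\triangle A$ when $e\notin A$. The loop-complementation identities for $\circ\in\{\setminus,/\}$ follow by iterating Lemma~\ref{reordering}(ii),(iii) one element of $A$ at a time, and for $\circ={\ddagger}$ they follow by expanding $T\ddagger e=(T+e)/e$ and combining the previous identities with the disjoint-support interchange $(T+A)*B=(T*B)+A$ quoted in the excerpt.

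Splitting each of $X$, $Y$, $Z$ along $\{e\}$ and applying these commutations repeatedly rewrites $S''$ as a twisted dual, via operations acting only on $E-e$, of $S_0\circ e$, where $S_0$ is a twisted dual of $S$ with respect to the singleton $\{e\}$ alone. Hence $S_0$ is one of the six set systems listed in the left column of Table~\ref{tab:minors}, and the table identifies $S_0\circ e$ as one of $S/e$, $S\setminus e$, or $S\ddagger e$. Each of these is a $3$-minor of $S$, and the twists and loop complementations that were pushed outside (acting on $E-e$) exhibit $S''$ as a twisted dual of that $3$-minor, completing the inductive step.

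The main obstacle is the bookkeeping needed to justify the decomposition of an arbitrary twisted dual into an ``$e$-part'' applied first and a ``non-$e$-part'' applied afterwards; the commutation identities above carry out this reorganisation in small steps, and once the decomposition is in hand the remaining case analysis is finite and is recorded exactly by Table~\ref{tab:minors}. A small care-point is that $\circ e$ changes the ground set from $E$ to $E-e$, so when reapplying the pushed-out operations on the outside, one should check that the sets $X\setminus e$, $Y\setminus e$, $Z\setminus e$ lie in $E-e$ (which they do by construction), so the twist/loop-complementation operations are still well defined after the minor is taken.
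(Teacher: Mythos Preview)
Your argument is correct and uses the same two ingredients as the paper: the commutation of twist/loop-complementation with a minor operation on a different element (the remark just before Table~\ref{tab:minors}) and Table~\ref{tab:minors} itself for the same-element case. The only difference is organisational: the paper processes all elements at once---grouping the sequence into the $A\cap B$ part (twisted dual then minor on the same element, collapsed via the table), the $B\setminus A$ part (pure $3$-minors), and the $A\setminus B$ part (twisted duals left outside)---whereas you induct on the number of minor operations and handle one $\circ e$ at a time via the normal form $((S*X)+Y)*Z$. Your route is a little more explicit about the bookkeeping; the paper's is terser but relies on the same commutation facts.
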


\begin{proof}
  There are subsets $A$ and $B$ of $E(S)$ such that we obtain $S''$
  from $S$ by first forming a twisted dual for each element of $A$ and
  then performing one of the three minor operations for each element
  of $B$. The remarks before this lemma imply that one may reorder
  these operations to first deal with the elements of $A\cap B$, one
  by one, forming a twisted dual for an element and then a $3$-minor
  before moving on to the next element. According to
  Table~\ref{tab:minors} each of these pairs of operations may be
  replaced by a single $3$-minor operation.  Next a $3$-minor is
  formed for each element of $B- A$. The resulting set system is a
  twisted dual of $S''$ with respect to the elements of $A-B$.
\end{proof}

\section{Characterizations by excluded $3$-minors}

Brijder and Hoogeboom \cite{B+H:nullity} showed that the class of
vf-safe delta-matroids is minor-closed.  A computer search for
excluded minors for this class turns up many examples with apparently
little structure.  The class of vf-safe delta-matroids is defined
using both the twist and loop complementation operations, so it is
natural to try to characterize this class using $3$-minors.  By Lemma
\ref{lem:pcclosed} below, the class of vf-safe delta-matroids is
closed under Penrose contraction, so, with the result in
\cite{B+H:nullity}, it is closed under $3$-minors.  The main result of
this section, Theorem \ref{thm:excl3minvfs}, gives the excluded
$3$-minors for the class of vf-safe delta-matroids within the class of
set systems.

\begin{lemma}\label{lem:pcclosed}
  If $S$ is vf-safe and $e\in E(S)$, then $S\ddagger e$ is vf-safe.
\end{lemma}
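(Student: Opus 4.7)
The plan is to prove directly that every twisted dual of $S \ddagger e$ is a delta-matroid. Using the normal form noted just before Lemma~\ref{reordering}, every such twisted dual can be written as $(((S \ddagger e)*X)+Y)*Z$ for subsets $X, Y, Z$ of $E(S)-e$ with $X \subseteq Z$.

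First I would record three commutation identities valid for any $A \subseteq E(S)-e$: namely $(S*A)/e = (S/e)*A$, $(S+A)/e = (S/e)+A$, and $(S+e)*A = (S*A)+e$. The third is stated just before Lemma~\ref{reordering}. The second follows from Lemma~\ref{reordering}(iii) by a routine induction on $|A|$. The first follows from the identity $S/e = S*e\ba e$ noted in the excerpt, together with the easy fact that twist and deletion on disjoint elements commute.

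Using these identities, together with the observation that loop complementations at distinct elements are commuting involutions so that $(S+e)+Y = S+(Y \cup e)$ whenever $e \notin Y$, I would rewrite
\[(((S \ddagger e)*X)+Y)*Z = (((S*X)+(Y \cup e))*Z)/e.\]
The inner expression $T = ((S*X)+(Y \cup e))*Z$ is a twisted dual of $S$, so by the vf-safe hypothesis on $S$, $T$ is a delta-matroid. Bouchet and Duchamp's theorem, cited in the excerpt, then gives that $T/e$ is a delta-matroid as well (regardless of whether $e$ is a loop of $T$, because of the convention $T/e = T\ba e$ in that case). Since the chosen twisted dual of $S \ddagger e$ was arbitrary, this shows that $S \ddagger e$ is vf-safe.

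The main obstacle will be verifying the three commutation identities in the degenerate cases where $e$ is a loop, a coloop, or a pseudo-loop of some intermediate set system, since then $/e$ reduces to $\ba e$ by convention. Handling both cases uniformly via $S/e = S*e\ba e$ should reduce everything to routine checks on feasible sets, after which all of the algebra collapses onto the single identity in the displayed equation.
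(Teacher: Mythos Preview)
Your proof is correct but follows a different route from the paper's. The paper's argument is a two-liner: since $S$ is vf-safe, its twisted dual $S+e$ is vf-safe by definition, and then Theorem~8.3 of Brijder--Hoogeboom (every minor of a vf-safe delta-matroid is vf-safe) gives that $(S+e)/e = S\ddagger e$ is vf-safe. Your argument instead avoids invoking that external theorem and establishes the result from first principles: you push the twist and loop-complementation operations past the contraction to exhibit each twisted dual of $S\ddagger e$ as $T/e$ for some twisted dual $T$ of $S$, and then appeal only to the Bouchet--Duchamp fact that contractions of delta-matroids are delta-matroids. The trade-off is clear: the paper's proof is shorter but relies on a cited closure result, while yours is self-contained modulo the commutation identities already set up in the paper. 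One small imprecision: the convention $S/e = S\ba e$ applies when $e$ is a loop (and the reverse when $e$ is a coloop); pseudo-loops are not directly relevant to the contraction convention. Your reduction via $S/e = S*e\ba e$ does handle all cases uniformly, so this does not affect correctness.
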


\begin{proof}
  If $S$ is vf-safe, then all of its twisted duals are vf-safe by
  definition, so $S+e$ is vf-safe.  Theorem~8.3 in~\cite{B+H:nullity}
  states that every minor of a vf-safe delta-matroid is vf-safe. Thus
  $S\ddag e=S+e/e$ is vf-safe.
\end{proof}

Let \[S_i=(\{e_1,e_2,\dots ,e_i\},\{\emptyset ,\{e_1,e_2,\dots ,e_i\}\}).\]
Let $\mathcal{S}$ be the set of all twists of the set systems in
$\{S_3,S_4,\dots \}$.  Let
\begin{itemize}
\item $T_1=(\{a,b,c\},\{\emptyset ,\{a,b\},\{a,b,c\}\})$;
\item $T_2=(\{a,b,c\},\{\emptyset ,\{a,b\},\{a,c\},\{a,b,c\}\})$;
\item $T_3=(\{a,b,c\},\{\emptyset ,\{a\},\{a,b\},\{a,b,c\}\})$;
\item $T_4=(\{a,b,c\},\{\emptyset,\{a\},\{a,b\},\{a,c\},\{a,b,c\}\})$;
\item $T_5=(\{a,b,c,d\},\{\emptyset ,\{a,b\},\{a,b,c,d\}\})$;
\item $T_6=(\{a,b,c,d\},\{\emptyset ,\{a,b\},\{a,c\},\{a,b,c,d\}\})$;
\item $T_7=(\{a,b,c,d\},\{\emptyset,\{a,b\},\{a,c\},\{a,d\},
  \{a,b,c,d\}\})$;
\item $T_8=(\{a,b,c,d\},\{\emptyset,\{a\},\{a,b\},\{a,c\},
  \{a,d\},\{a,b,c,d\}\})$.
\end{itemize}
Let $\mathcal{T}$ be the set of all twists of the set systems in
$\{T_1,T_2, \dots ,T_8\}$.  By the following result
from~\cite[Theorem~5.1]{BCN1}, these are all of the excluded minors
for delta-matroids within the class of set systems.

\begin{theorem}
  A proper set system $S$ is a delta-matroid if and only if $S$ has no
  minor isomorphic to a set system in $\mathcal{S}\cup\mathcal{T}$.
\end{theorem}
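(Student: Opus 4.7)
The plan for the forward direction is to verify directly that each set system $S_i$ with $i \geq 3$, and a suitable twist of each $T_j$, violates the symmetric exchange axiom. For $S_i$, the triple $X=\emptyset$, $Y=\{e_1,\ldots,e_i\}$, $u=e_1$ is a witness, since no $v \in Y$ makes $\{e_1,v\}$ feasible once $i \geq 3$. Since the involution $F \mapsto F \triangle A$ commutes with symmetric difference, (SE) is preserved under twisting, so no member of $\mathcal{S} \cup \mathcal{T}$ is a delta-matroid. Combined with the Bouchet--Duchamp theorem that minors of delta-matroids are delta-matroids, this yields the ``only if'' direction.

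For the converse I would argue contrapositively. Let $S=(E,\mathcal{F})$ be a proper set system that is not a delta-matroid; by passing to a smallest non-delta-matroid minor I may assume $S$ is itself an excluded minor for the class of delta-matroids. Fix a witness $(X,Y,u)$ to the failure of (SE) in $S$ with $|X \triangle Y|$ minimum. The commutation identities between twists and minors that hold for arbitrary set systems---for instance $(S*A)/e=(S/e)*A$ when $e\notin A$, with analogous rules when $e\in A$---show that every minor of $S*X$ is a twist of a minor of $S$; since $\mathcal{S} \cup \mathcal{T}$ is closed under twisting, it suffices to exhibit a minor of $S*X$ belonging to $\mathcal{S} \cup \mathcal{T}$, and moreover $S*X$ inherits the excluded-minor property (twists of delta-matroids are delta-matroids). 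Replacing $S$ by $S * X$, I may assume that $\emptyset, Y \in \mathcal{F}$, $u \in Y$, and that no set of the form $\{u,v\}$ with $v \in Y$ (interpreting $\{u,u\}$ as $\{u\}$) is feasible.

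Next I would argue that $E=Y$: for each $e \in E - Y$ one of $S \setminus e$ or $S / e$ is well-defined and preserves the witness $(\emptyset,Y,u)$, and by minimality this forces $E=Y$. Analogous excluded-minor arguments internal to $Y$ constrain the feasible subsets of $Y$: only configurations whose every proper minor is a delta-matroid and for which no smaller $(\emptyset,Y',u')$-witness exists are permitted. The main obstacle is the resulting classification. A case analysis shows that when $|Y|\geq 5$ the only possibility is $\mathcal{F}=\{\emptyset,Y\}$, producing $S_{|Y|}\in\mathcal{S}$, while when $|Y|\in\{3,4\}$, after a final twist that fixes $\emptyset$, the feasible family is isomorphic to exactly one of $T_1,\ldots,T_8$. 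The bookkeeping for $|Y|=4$, with its many symmetric sub-cases depending on which feasible subsets of $Y$ outside $\{\emptyset,Y\}$ appear (subject to the constraint that no feasible set of size at most $2$ contains $u$), is where most of the work lies.
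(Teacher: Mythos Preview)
This theorem is not proved in the present paper; it is quoted as \cite[Theorem~5.1]{BCN1}, and no argument for it appears here. There is thus no proof in this paper to compare your attempt against.

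That said, your outline is along reasonable lines. The forward direction is essentially right: each $S_i$ (for $i\geq 3$) and each $T_j$ visibly fails (SE), (SE) is preserved by twists, and delta-matroids are closed under minors by Bouchet--Duchamp, so no delta-matroid has a minor in $\mathcal S\cup\mathcal T$. For the converse, your reduction by twisting is legitimate: a single-element minor of $S*X$ is a twist of a single-element minor of $S$, hence a delta-matroid, and all further minors are then delta-matroids by Bouchet--Duchamp; so $S*X$ is again an excluded minor and you may normalize to $\emptyset,Y\in\mathcal F$. The argument forcing $E=Y$ is correct.

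Two points to flag. First, the phrase ``a final twist that fixes $\emptyset$'' is not meaningful as written---the only twist with $\emptyset\mapsto\emptyset$ is the identity; presumably you mean a twist by some feasible set so that $\emptyset$ remains feasible---and in the cases $|Y|\in\{3,4\}$ you have omitted $S_3$ and $S_4$ from the possible outcomes. Second, and more substantively, the case analysis that pins down the list (in particular that $|Y|\geq 5$ forces $\mathcal F=\{\emptyset,Y\}$, and that $|Y|=4$ yields only twists of $S_4,T_5,\ldots,T_8$) is asserted rather than carried out; this is indeed where the work lies, and without it the argument remains a plan rather than a proof.
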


The following lemma is key for finding the excluded $3$-minors for
vf-safe delta-matroids within the class of set systems.

\begin{lemma}\label{lem:3minorvf}
  Let $S$ be an excluded $3$-minor for the class of vf-safe
  delta-matroids. Then $S$ has a twisted dual that is isomorphic to a
  set system in $\mathcal S \cup \mathcal T$.
\end{lemma}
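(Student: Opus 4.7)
The plan is to combine the excluded minor characterisation of delta-matroids (the theorem stated just before the lemma) with Lemma~\ref{lem:3minorsdualsstuff} and the defining minimality of $S$ as an excluded $3$-minor.

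First I would unwind the hypothesis. Because $S$ is not vf-safe, the definition of vf-safe delivers a twisted dual $S'$ of $S$ that fails to be a delta-matroid. The excluded minor theorem for delta-matroids then produces a minor $M$ of $S'$ that is isomorphic to a set system in $\mathcal{S}\cup\mathcal{T}$. Every minor is a $3$-minor, so Lemma~\ref{lem:3minorsdualsstuff}, applied with $M$ playing the role of $S''$, yields a $3$-minor $N$ of $S$ that is a twisted dual of $M$.

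The heart of the argument is to show that $N=S$. Since each of the three $3$-minor operations strictly shrinks the ground set, the only alternative is that $N$ is a proper $3$-minor of $S$. In that case the excluded $3$-minor hypothesis on $S$ forces $N$ to be vf-safe, so every twisted dual of $N$ is a delta-matroid. Twisted duality is symmetric (twists and loop complementations are each involutions), so $M$ is a twisted dual of $N$; it would therefore be a delta-matroid, contradicting the fact that every set system in $\mathcal{S}\cup\mathcal{T}$ is by definition an excluded minor for delta-matroids and so is not itself a delta-matroid. Hence $N=S$, so $S$ is a twisted dual of $M$, giving the required twisted dual of $S$ isomorphic to an element of $\mathcal{S}\cup\mathcal{T}$.

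I do not anticipate a serious technical obstacle; the only care needed is in correctly orchestrating Lemma~\ref{lem:3minorsdualsstuff} and in using the symmetry of the twisted-duality relation so that the vf-safeness of $N$ can be transferred across to $M$.
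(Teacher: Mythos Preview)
Your proposal is correct and follows essentially the same route as the paper: find a twisted dual that is not a delta-matroid, take an excluded minor $M$ of it, pull $M$ back to a $3$-minor of $S$ via Lemma~\ref{lem:3minorsdualsstuff}, and use minimality to conclude that this $3$-minor equals $S$. The only cosmetic difference is that the paper splits off the case where $S$ itself is not a delta-matroid (concluding directly that $S\in\mathcal S\cup\mathcal T$), whereas you handle both cases at once by taking $S'=S$ when $S$ is not a delta-matroid; since $S$ is trivially a twisted dual of itself, this unified treatment is fine.
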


\begin{proof}
  Such an excluded $3$-minor $S$ either is not a delta-matroid and all
  of its other minors are delta-matroids, or it is a delta-matroid and
  has a twisted dual $S'$ that is not a delta-matroid. In the former
  case $S$ is isomorphic to a set system in
  $\mathcal S \cup \mathcal T$ and the lemma holds. In the latter case
  $S'$ has a minor $S''$ isomorphic to a member of
  $\mathcal S \cup \mathcal T$. By Lemma~\ref{lem:3minorsdualsstuff},
  $S$ has a $3$-minor $S'''$ that is a twisted dual of
  $S''$. Therefore $S'''$ is not a vf-safe delta-matroid. The only
  $3$-minor of $S$ that is not a vf-safe delta-matroid is $S$
  itself. Hence $S=S'''$ and the lemma holds.
\end{proof}

To connect the next result with the remarks in Section
\ref{sec:intro}, note that $D_3+\{a,b,c\} =S_3$.

\begin{theorem}\label{thm:excl3minvfs}
  A proper set system is a vf-safe delta-matroid if and only if it has
  no $3$-minor that is isomorphic to a twisted dual of $S_3$.
\end{theorem}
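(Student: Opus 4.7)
The plan is to establish the two directions separately.

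For the forward direction, I combine Lemma~\ref{lem:pcclosed} with the theorem of Brijder and Hoogeboom (cited in the proof of that lemma) that vf-safe delta-matroids are closed under deletion and contraction, so the class of vf-safe delta-matroids is $3$-minor closed. It then suffices to check that no twisted dual of $S_3$ is a vf-safe delta-matroid. But $S_3$ is an excluded minor for delta-matroids (it lies in $\mathcal{S}$), and any twisted dual $T$ of $S_3$ has $S_3$ itself as a twisted dual; hence $T$ either fails to be a delta-matroid or is a delta-matroid with a non-delta-matroid twisted dual, so is not vf-safe.

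For the reverse direction I argue the contrapositive. Let $S$ be a proper set system that is not a vf-safe delta-matroid, and let $S_0$ be a $3$-minor of $S$ with the smallest possible ground set that still fails to be a vf-safe delta-matroid; then $S_0$ is an excluded $3$-minor for the class. By Lemma~\ref{lem:3minorvf}, $S_0$ has a twisted dual $S_0'$ isomorphic to some member $T$ of $\mathcal{S}\cup\mathcal{T}$. The theorem then follows from the following claim: \emph{every member of $\mathcal{S}\cup\mathcal{T}$ either is a twisted dual of $S_3$ or has a proper $3$-minor that is a twisted dual of $S_3$}. Granting this, if $T$ is a twisted dual of $S_3$ then so is $S_0$, by transitivity of twisted duality, giving the desired $3$-minor of $S$ directly; otherwise, applying Lemma~\ref{lem:3minorsdualsstuff} to the proper $3$-minor of $T$ provided by the claim produces a proper $3$-minor of $S_0$, hence of $S$, that is (up to isomorphism) a twisted dual of a twisted dual of $S_3$ and therefore a twisted dual of $S_3$.

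The main obstacle is proving the claim. For the family $\mathcal{S}$, Definition~\ref{def:loopcomp} gives directly that $\mathcal{F}(S_n+e_n)=\{\emptyset,\{e_n\},\{e_1,\ldots,e_n\}\}$, so $S_n\ddagger e_n=S_{n-1}$; iterating shows that $S_n$ has $S_3$ as a proper $3$-minor for all $n\geq 4$, and Lemma~\ref{lem:3minorsdualsstuff} transfers this to every twist $S_n*A$. For the family $\mathcal{T}$ the work is genuinely case-by-case: for each of $T_1,\ldots,T_8$ and each of their twists, one must either identify the set system as one of the $28$ twisted duals of $S_3$ listed in Tables~\ref{t1}--\ref{t6}, or exhibit an explicit sequence of deletions, contractions, and Penrose contractions that reduces it to one. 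In particular, since $T_1,\ldots,T_4$ have $3$-element ground sets, for these one cannot pass to a proper $3$-minor with $S_3$ structure and so must verify directly that they are among the $28$; while $T_5,\ldots,T_8$ must each be reduced by an element-removing operation to one of the $28$. Because a twist of $T_i$ is itself a twisted dual of $T_i$, Lemma~\ref{lem:3minorsdualsstuff} lets one work with a single representative from each twist-orbit, but even so this exhaustive verification is the computational heart of the argument.
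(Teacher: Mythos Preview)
Your plan is correct and matches the paper's proof closely: both directions use the same ingredients (Lemma~\ref{lem:pcclosed} plus Brijder--Hoogeboom for $3$-minor closure, Lemma~\ref{lem:3minorvf} to reduce to $\mathcal{S}\cup\mathcal{T}$, Lemma~\ref{lem:3minorsdualsstuff} to pass between twisted duals, and $S_n\ddagger e_n=S_{n-1}$ for the $\mathcal{S}$ family). The only notable difference is that the case analysis you describe as the ``computational heart'' is far shorter than you anticipate: the paper dispatches $T_1,\ldots,T_4$ with the chain $T_4+a=T_2$, $T_2^*+\{b,c\}\simeq T_3+a=T_1$, $T_1^*+c=S_3$, and dispatches $T_5,\ldots,T_8$ with single Penrose contractions $T_5\ddagger d=T_1$, $T_6\ddagger d=T_8\ddagger d=T_2$, $T_7\ddagger d=T_4$---no need to consult the tables or check twists separately.
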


\begin{proof}
  All proper set systems with two elements are delta-matroids, and
  therefore each one is vf-safe, so the twisted duals of $S_3$ are
  excluded $3$-minors for the class of vf-safe delta-matroids.  By
  Lemma~\ref{lem:3minorvf} every excluded $3$-minor for the class of
  vf-safe delta-matroids must be a twisted dual of a set system in
  $\mathcal S \cup \mathcal T$. We first consider the set systems with
  three-element ground sets.  We have $T_1^*+c=S_3$ and
  $T_2^*+\{b,c\}\simeq T_3+a=T_1$ and $T_4+a=T_2$, so every excluded
  $3$-minor of size three is a twisted dual of $S_3$.

  Lastly, we show that no other set system in
  $\mathcal S \cup \mathcal T$ is an excluded $3$-minor.
  Lemma~\ref{lem:3minorsdualsstuff} implies that each twisted dual of
  an excluded $3$-minor is an excluded $3$-minor, so it suffices to
  show that each of $T_5$, $T_6$, $T_7$, $T_8$, and $S_n$, for
  $n\geq 4$, has a smaller member of $\mathcal{S}\cup\mathcal{T}$ as
  a $3$-minor.  Indeed, $S_n \ddagger e_n = S_{n-1}$, for $n \geq 4$,
  $T_5\ddag d=T_1$, $T_6\ddag d= T_8\ddag d=T_2$, and
  $T_7\ddag d=T_4$.
\end{proof}

As stated in the introduction, there are 28 twisted duals of $S_3$, up
to isomorphism.  These excluded $3$-minors are listed in
Tables~\ref{t1}--\ref{t6}.

\section{$3$-minors and vertex minors}\label{sec:vertexminor}

In this section we explain the link between $3$-minors of binary
delta-matroids and vertex minors of graphs.  As we shall see later, up
to twisted duality, each binary delta-matroid may be represented by a
loopless simple graph, and this equivalence is preserved under
appropriate $3$-minor operations in the binary delta-matroid and
vertex minors in the graph.

Vertex minors are well-studied, but are defined only for graphs. In
contrast, the concept of a $3$-minor is relatively unstudied, but is
important due to its direct correlation with ribbon-graph operations
and its applicability beyond binary delta-matroids. By combining
several existing results from the literature, one may show that the
notions of $3$-minors and vertex minors are equivalent in a sense to
be made precise later, but to see this equivalence one must piece
together results involving several combinatorial structures, which are
not obviously related. For this reason, and for completeness, we give
a full explanation of the link here. Although the key ideas presented
here are not new, as far as we know this direct link has not
previously been fully described.

A delta-matroid is \emph{normal} if the empty set is feasible. A
delta-matroid is \emph{even} if for every pair $F_1$ and $F_2$ of its
feasible sets $|F_1 \btu F_2|$ is even. Equivalently, the sizes of all
its feasible sets are of the same parity.  Let $M$ denote a symmetric
binary matrix with rows and columns indexed by $[n]=\{1,\ldots,n\}$.
Take $E=[n]$ and $\mathcal F$ to be the collection of subsets $S$ of
$[n]$ for which the principal submatrix of $M$ comprising the rows and
columns indexed by elements of $S$ is
non-singular. Bouchet~\cite{bou:rep} showed that $D(M)=(E,\mathcal F)$
is a delta-matroid. We call such delta-matroids \emph{basic binary}.
(In the literature, what we have called basic binary delta-matroids
are often called graphic, but we prefer to avoid this term to prevent
confusion with ribbon-graphic delta-matroids.)  A delta-matroid is
\emph{binary}~\cite{bou:rep} if it is a twist of a basic binary
delta-matroid.

It follows immediately from the definition that every basic binary
delta-matroid is normal and that a basic binary delta-matroid is
uniquely determined by its feasible sets of size at most two.  A
well-known result of linear algebra states that a symmetric matrix
with an odd number of rows (and columns) and zero diagonal is
singular.  Consequently a basic binary delta-matroid is even if and
only if it has no feasible sets of size one.

Let $A$ be a matrix over an arbitrary field with rows and columns
indexed by $[n]$, and let $X$ be a subset of $[n]$ such that the
principal submatrix $P=A[X]$ is non-singular. Suppose without loss of
generality that $A=\begin{pmatrix}P&Q\\R&S\end{pmatrix}$. Then the
matrix $A*X$ is defined by
\[ A*X
= \begin{pmatrix}P^{-1}&-P^{-1}Q\\RP^{-1}&S-RP^{-1}Q\end{pmatrix}.\]
Note that if $A$ is a symmetric binary matrix then $A*X$ is
symmetric. The following result is due to Tucker~\cite{Tucker}.

\begin{theorem}
  Let $A$ be a matrix over an arbitrary field with rows and columns
  indexed by $[n]$, and let $X$ be a subset of $[n]$ such that the
  principal submatrix $P=A[X]$ is non-singular. Then for every subset
  $Y$ of $[n]$, we have
\[\det\bigl((A*X)[Y]\bigr) = \frac{\det(A[X\bigtriangleup Y])}{\det(A[X])}.\]
\end{theorem}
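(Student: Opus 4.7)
The plan is to combine the block Schur complement determinant identity with Jacobi's classical identity for principal submatrices of an inverse.

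First I would reduce to the case $X \cup Y = [n]$: each of the four blocks of $(A*X)[Y]$, namely $(P^{-1})[X\cap Y]$, $-(P^{-1}Q)[X\cap Y,\,Y\setminus X]$, $(RP^{-1})[Y\setminus X,\,X\cap Y]$, and $(S-RP^{-1}Q)[Y\setminus X]$, depends only on entries of $A$ with both indices in $X\cup Y$, and the same holds for $A[X\btu Y]$. Hence replacing $A$ by $A[X\cup Y]$ does not affect either side of the claim. Writing $X_1=X\cap Y$, $X_2=X\setminus Y$, and $Y_2=Y\setminus X$ partitions $[n]=X_1\sqcup X_2\sqcup Y_2$, with $Y=X_1\cup Y_2$ and $X\btu Y=X_2\cup Y_2$.

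Next I would treat the generic case where $A$ and the Schur complement $\delta:=S-RP^{-1}Q$ are invertible (equivalent conditions, since $\det A=\det P\cdot\det\delta$). In $(X_1,Y_2)$-block form the submatrix $(A*X)[Y]$ has blocks $\alpha=(P^{-1})[X_1]$, $\beta=-(P^{-1}Q)[X_1,Y_2]$, $\gamma=(RP^{-1})[Y_2,X_1]$, and $\delta$; the Schur complement determinant identity with pivot $\delta$ gives
\[\det((A*X)[Y])=\det(\delta)\cdot\det(\alpha-\beta\delta^{-1}\gamma),\]
and $\det(\delta)=\det(A)/\det(P)$ by the block LDU factorization of $A$. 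The crux is the identification $\alpha-\beta\delta^{-1}\gamma=(A^{-1})[X_1]$, which I would derive from the standard block-inversion formula
\[A^{-1}=\begin{pmatrix}P^{-1}+P^{-1}Q\delta^{-1}RP^{-1}&-P^{-1}Q\delta^{-1}\\-\delta^{-1}RP^{-1}&\delta^{-1}\end{pmatrix}\]
by restricting to the $X_1$-principal block and recognising $\alpha$, $\beta$, and $\gamma$. Jacobi's identity for principal submatrices of an inverse then gives $\det((A^{-1})[X_1])=\det(A[[n]\setminus X_1])/\det(A)=\det(A[X\btu Y])/\det(A)$, and combining the two factors yields
\[\det((A*X)[Y])=\frac{\det(A)}{\det(P)}\cdot\frac{\det(A[X\btu Y])}{\det(A)}=\frac{\det(A[X\btu Y])}{\det(A[X])}.\]

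The main obstacle will be extending past the generic hypothesis: when $\det(A)=0$ the Schur complement $\delta$ is singular and the block-inversion step fails. However, after multiplying the identity through by $\det(A[X])\ne 0$ both sides become polynomials in the entries of $A$; having proved the identity on the Zariski-dense open set where $A$ is invertible, it therefore extends to every $A$ satisfying the sole standing hypothesis $\det(A[X])\ne 0$. Equivalently, one may treat the entries of $A$ outside $A[X]$ as indeterminates and specialize the generic identity in their field of rational functions.
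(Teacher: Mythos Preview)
The paper does not prove this theorem; it is quoted as a result of Tucker and simply cited. Your argument is a correct self-contained proof: the reduction to $X\cup Y=[n]$ is valid (every block of $(A*X)[Y]$ depends only on $A[X\cup Y]$, and pivoting commutes with this restriction since $X\subseteq X\cup Y$ and $A[X]$ is still the pivot block), the Schur-complement/Jacobi computation in the invertible case is accurate, and the specialization step is sound.

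One phrasing to tighten. After multiplying through by $\det(A[X])$ the left-hand side is not literally a polynomial in the $a_{ij}$: each entry of $A*X$ carries a denominator $\det P$, so $\det\bigl((A*X)[Y]\bigr)$ a~priori has denominator $(\det P)^{|Y|}$, and a single factor of $\det P$ does not clear it. Your second formulation is the one that actually works and should be the primary argument: over $K=k(a_{ij})$ the generic matrix is invertible, so the identity holds in $K$; both sides lie in the localization $k[a_{ij}][(\det P)^{-1}]\subseteq K$, and evaluation at any matrix over $k$ with $\det P\ne 0$ is a well-defined ring homomorphism from that localization to $k$. Equivalently, both sides are regular functions on the open set $\{\det P\ne 0\}$ that agree on the dense open subset $\{\det A\ne 0\}$, hence agree everywhere on $\{\det P\ne 0\}$.
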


In particular for any subset $Y$ of $[n]$, the principal submatrix
$(A*X)[Y]$ is non-singular if and only if the principal submatrix
$A[X\bigtriangleup Y]$ is non-singular.

The following corollary is immediate.

\begin{corollary}\label{cor:twistrep}
  Suppose that $A$ is a binary matrix, and $X$ is a feasible set of
  $D(A)$. Then $D(A)*X=D(A*X)$.
\end{corollary}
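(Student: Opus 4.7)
The plan is to unfold both sides of the claimed equality in terms of singularity of principal submatrices of $A$, and then invoke the consequence of Tucker's theorem stated just before the corollary. Since $X$ is feasible in $D(A)$, the principal submatrix $A[X]$ is non-singular, so $A*X$ is well-defined; moreover, since $A$ is symmetric and binary, $A*X$ is a symmetric binary matrix and $D(A*X)$ makes sense.

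First I would describe the feasible sets of $D(A*X)$: by definition, a set $Y\subseteq [n]$ is feasible in $D(A*X)$ exactly when $(A*X)[Y]$ is non-singular, which by Tucker's theorem (in the ``In particular'' form stated immediately after it) happens if and only if $A[X\btu Y]$ is non-singular, i.e.\ $X\btu Y\in\mathcal F(D(A))$.

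Next I would unfold the right-hand twist of the twist operation: by the definition of the twist, a set $Y$ is feasible in $D(A)*X$ if and only if $Y=F\btu X$ for some $F\in\mathcal F(D(A))$. Setting $F=X\btu Y$ and using $X\btu X=\emptyset$, this is equivalent to $X\btu Y\in\mathcal F(D(A))$.

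The two conditions on $Y$ coincide, so the collections of feasible sets of $D(A*X)$ and of $D(A)*X$ are equal; since the ground sets are both $[n]$, the set systems are equal. There is essentially no obstacle beyond correctly matching the symmetric-difference bookkeeping in the twist with the $X\btu Y$ appearing in Tucker's identity.
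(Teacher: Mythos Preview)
Your proof is correct and is precisely the argument the paper intends: the paper states that the corollary ``is immediate'' from Tucker's theorem, and your unfolding of the feasible sets on each side via the symmetric-difference criterion is exactly that immediate verification. There is nothing to add or correct.
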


See~\cite{bou:rep} for an alternative proof of this result that holds
for arbitrary fields. A consequence of this corollary is that every
normal twist of a basic binary delta-matroid is basic binary.

A \emph{looped simple graph} is a graph without parallel edges but in
which each vertex is allowed to have up to one loop.  Much of the time
we will forbid loops; we call such graphs \emph{loopless simple
  graphs}.  Recall that basic binary delta-matroids are completely
determined by their feasible sets with size two or fewer.  Clearly
basic binary delta-matroids on the set $[n]$ are in one-to-one
correspondence with looped simple graphs with vertex set $[n]$;
likewise, even basic binary delta-matroids on $[n]$ are in one-to-one
correspondence with loopless simple graphs with vertex set $[n]$.

We take adjacency matrices to always be binary.  Given a looped simple
graph $G$ and its adjacency matrix $A$, we let $D(G)$ denote the basic
binary delta-matroid $D(A)$.  If $X$ is a subset of the edges of $G$,
then $X$ labels a subset of the rows and columns of $A$, and we define
$G*X$ to be the looped simple graph with adjacency matrix $A*X$.

We now consider various transformations that may be applied to $G$
and their effect on $D(G)$.

The loop complementation operation of Brijder and Hoogeboom was first
defined in terms of basic binary delta-matroids. If $G$ is a looped
simple graph and $v$ is a vertex of $G$, then the loop complementation
$G+v$ is formed by toggling the loop at $v$, that is, removing a loop
if there is one at $v$ and adding one at $v$ if there is no loop
there.

The following lemma from~\cite{B+H:group} is straightforward.

\begin{lemma}
  Let $G$ be a looped simple graph with vertex $v$. Then
  $D(G+v)=D(G)+v$.
\end{lemma}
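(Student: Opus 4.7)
The plan is to compare the feasible sets of $D(G+v)$ and $D(G)+v$ directly, using the characterization of feasibility in a basic binary delta-matroid (nonsingularity of a principal submatrix of the adjacency matrix) on one side and the combinatorial characterization in Definition~\ref{def:loopcomp} (or the displayed formula in the remark following it) on the other. Write $A$ for the adjacency matrix of $G$ and $A'$ for the adjacency matrix of $G+v$, so that $A$ and $A'$ agree except in the $(v,v)$-entry, which is toggled in $\mathrm{GF}(2)$.

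I would split into two cases according to whether $v\in F$. If $v\notin F$, then $A'[F]=A[F]$, so $F$ is feasible in $D(G+v)$ iff it is feasible in $D(G)$; and by the remark after Definition~\ref{def:loopcomp} (for a single element) a set $F$ with $v\notin F$ is feasible in $D(G)+v$ iff it is feasible in $D(G)$. So the two set systems agree on sets not containing $v$.

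The core of the argument is the case $v\in F$. Here I would compute $\det(A'[F])$ in $\mathrm{GF}(2)$ by cofactor expansion along the row of $A'[F]$ indexed by $v$. Writing $a$ for the $(v,v)$-entry of $A$ and $a+1$ for the corresponding entry of $A'$, only the term coming from the $(v,v)$ position is affected, and that term is $a\cdot\det(A[F\setminus v])$ for $A$ and $(a+1)\cdot\det(A[F\setminus v])$ for $A'$; all other terms in the expansion are identical. Therefore, over $\mathrm{GF}(2)$,
\[
\det(A'[F])=\det(A[F])+\det(A[F\setminus v]).
\]
Hence $\det(A'[F])=1$ iff exactly one of $\det(A[F])$ and $\det(A[F\setminus v])$ equals $1$, i.e., iff exactly one of $F$ and $F\setminus v$ is feasible in $D(G)$. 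By the definition of loop complementation (the displayed formula right after Definition~\ref{def:loopcomp}), this is exactly the condition for $F$ to be feasible in $D(G)+v$.

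The main obstacle, if there is one, is the bookkeeping in the cofactor expansion: one has to be sure that the only contribution that changes when $a$ is toggled is the one at the $(v,v)$-position, which requires noting that the cofactor attached to $(v,v)$ is $\det(A[F\setminus v])$ and that all other cofactors in the expansion along row $v$ do not involve the $(v,v)$-entry. Everything else is immediate from the definitions, and working in $\mathrm{GF}(2)$ removes any sign concerns.
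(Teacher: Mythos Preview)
Your argument is correct. The cofactor-expansion identity $\det(A'[F])=\det(A[F])+\det(A[F\setminus v])$ over $\mathrm{GF}(2)$ is exactly the right tool, and your case split on whether $v\in F$ matches the single-element description of loop complementation cleanly.

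As for comparison with the paper: the paper does not actually prove this lemma. It simply cites the result from Brijder and Hoogeboom and declares it ``straightforward.'' Your write-up supplies precisely the details the paper omits, and the route you take---toggling a single diagonal entry and reading off the effect on principal minors via the Laplace expansion---is the natural one and almost certainly what the authors (and Brijder--Hoogeboom) have in mind. There is no alternative approach in the paper to contrast with yours.
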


Our next operation is local complementation. There are several
variations in the definition of local complementation: see, for
instance,~\cite{Traldi:binloc}. We will only require certain cases of
what is defined there.  For a looped simple graph $G$ with vertex $v$,
let $N_G(v)$ denote the \emph{open neighbourhood} of $v$, that is, the
set of vertices, excluding $v$, that are adjacent to $v$ in $G$. We
will generally write $N$ instead of $N_G$ when there is no possibility
of confusion.  The \emph{local complementation} of $G$ at $v$, denoted
by $G^v$, is formed by toggling the adjacencies between pairs of
neighbours of $v$, that is, for every distinct pair $x$, $y$ from the
neighbourhood of $v$, delete edge $xy$ if $x$ and $y$ are adjacent in
$G$ and add edge $xy$ if $x$ and $y$ are not adjacent in
$G$. Additionally, if there is a loop at $v$, then the loop status of
every vertex in the open neighbourhood of $v$ is toggled.  In both
cases, adjacencies involving one or more non-neighbours of $v$ or $v$
itself are unchanged and the presence or not of a loop at $v$ is
unaffected. To distinguish the two cases, it will be helpful to refer
to local complementation at $v$ as \emph{simple local complementation}
when $v$ is loopless, and \emph{non-simple local complementation} when
there is a loop at $v$.

For delta-matroid $D$ and subset $A\subseteq E(D)$, let $D\thickbar{*}A$
denote the
\emph{dual pivot on $A$}, which is equal to $D+A*A+A$.  The following
result is implicit in the results of~\cite{Traldi:binloc}, but is not
expressed in this form.

\begin{proposition}\label{prop:localcomp}
  Let $G$ be a loopless simple graph with vertex $v$. Then
  $D(G^v)=(D(G)\thickbar{*} v) + N(v)$.
\end{proposition}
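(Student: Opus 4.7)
The plan is to work entirely at the level of adjacency matrices, unwinding the right-hand side
\[(D(G)\thickbar{*} v) + N(v) = (((D(G)+v)*v)+v)+N(v)\]
one operation at a time and matching the outcome against the adjacency matrix of $G^v$. Place $v$ first and write the adjacency matrix of $G$ in block form as
\[A = \begin{pmatrix} 0 & r^T \\ r & B \end{pmatrix},\]
where $r$ is the binary column vector indexed by $V-v$ whose support is $N(v)$, and $B$ is the adjacency matrix of $G-v$. Since $G$ is loopless, the $(v,v)$ entry of $A$ and the diagonal of $B$ both vanish.

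I would then track how the matrix evolves under each of the four operations in turn. The preceding lemma gives $D(G)+v = D(G+v)$, so the matrix becomes $A'$ obtained from $A$ by toggling the $(v,v)$ entry to $1$. Because $A'[\{v\}]=(1)$ is nonsingular, $\{v\}$ is feasible in $D(G+v)$, and Corollary~\ref{cor:twistrep} lets me replace the next $*v$ by the matrix pivot. Over $\mathbb{F}_2$ the block-pivot formula collapses neatly to
\[A'*v = \begin{pmatrix} 1 & r^T \\ r & B + r r^T \end{pmatrix}.\]
Applying the outer $+v$ toggles the $(v,v)$ entry back to $0$, while the final $+N(v)$ toggles the diagonal entry at each $w \in N(v)$.

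The key observation is that in the $(V-v)\times(V-v)$ block, the diagonal of $B + rr^T$ equals the indicator vector of $N(v)$: this is because $r_w^2 = r_w$ over $\mathbb{F}_2$ and $B$ has zero diagonal. Hence $+N(v)$ zeroes out exactly these diagonal entries, leaving a symmetric binary matrix whose diagonal vanishes, whose $v$-row and $v$-column equal $r$, and whose off-diagonal $(x,y)$-entry for distinct $x,y \in V-v$ equals $B_{xy} + r_x r_y$. This is precisely the adjacency matrix of $G^v$: simple local complementation toggles the adjacency $xy$ exactly when $x, y \in N(v)$, i.e.\ when $r_x r_y = 1$, and leaves all edges incident to $v$ unchanged. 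Since $D(\cdot)$ depends only on the adjacency matrix, the identity $D(G^v)=(D(G)\thickbar{*} v) + N(v)$ follows.

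The main obstacle is the diagonal bookkeeping. The matrix pivot $*v$ introduces spurious loops at every neighbour of $v$ through the rank-one correction $rr^T$, and the outer $+N(v)$ is precisely the loop-toggle operation that erases them. Verifying this exact cancellation---and checking that the off-diagonal part of $rr^T$ agrees with simple local complementation---is the technical core of the argument; everything else is routine matrix manipulation together with one application of Corollary~\ref{cor:twistrep}.
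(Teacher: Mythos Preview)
Your proof is correct and follows essentially the same route as the paper: both add a loop at $v$ to make $\{v\}$ feasible, apply Corollary~\ref{cor:twistrep} to realize $*v$ as the matrix pivot yielding the rank-one correction $rr^T$ (the paper's $c^tc$), and then observe that the subsequent $+v+N(v)$ erases exactly the spurious loops introduced on the diagonal while the off-diagonal part of $rr^T$ matches simple local complementation. The only cosmetic difference is that the paper names the intermediate matrix as the adjacency matrix of the non-simple local complementation of $G+v$, whereas you work directly with the block entries.
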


\begin{proof}
  Let $A$ be the adjacency matrix of $G$. Then $A$ is symmetric and
  all of its diagonal entries are zero.  Notice that the simple local
  complementation $G^v$ can be formed by adding a loop at $v$,
  performing the non-simple local complementation at $v$ and then
  removing the loops added at $v$ and all of its neighbours.

  We have $D(G+v)=D(G)+v$.  Assume without loss of generality that
  $v=1$ and let $Z=[n]-1$. Then the adjacency matrix of $G+v$ is
  $\begin{pmatrix}1& c \\ c^t & A[Z]\end{pmatrix}$ for some vector
  $c$. Then it follows from Corollary~\ref{cor:twistrep} that
  $(D(G)+v)*v = D((G+v)*v)=D(A')$ where
  $A'=\begin{pmatrix}1& c \\ c^t & A[Z]+c^tc\end{pmatrix}$.

  A diagonal entry of $c^tc$ is non-zero if it corresponds to a
  neighbour of $v$ and an off-diagonal entry of $c^tc$ is non-zero if
  both the row and column correspond to neighbours of $v$. Thus
  $(D(G)+v)*v = D(G')$ where $G'$ is formed from $G$ by adding a loop
  at $v$ and performing the non-simple local complementation at
  $v$. Now $G'$ has loops at $v$ and at all neighbours of $v$, so
  \begin{equation*}
    D(G^v)=D(G'+v+N(v))=D(G')+v+N(v)=(D(G)\thickbar{*} v) + N(v).\qedhere
  \end{equation*}
\end{proof}

The corollary below is well-known and follows from the previous
result.

\begin{corollary}
  Let $G$ be a loopless simple graph with adjacent vertices $v$ and
  $w$. Then $D(((G^v)^w)^v)=D(G)*\{v,w\}$.
\end{corollary}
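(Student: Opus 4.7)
The plan is to iterate Proposition~\ref{prop:localcomp} three times along the sequence $G$, $G^v$, $(G^v)^w$, $((G^v)^w)^v$. Since $G$ is loopless and simple local complementation at a loopless vertex neither creates nor destroys any loop (it only toggles adjacencies between pairs of neighbours of the pivot vertex), each of $G^v$, $(G^v)^w$ and $((G^v)^w)^v$ is again a loopless simple graph, so the proposition applies at every stage. This gives
\[ D(((G^v)^w)^v) = \Bigl(\bigl(D(G)\thickbar{*}v + N_G(v)\bigr)\thickbar{*}w + N_{G^v}(w)\Bigr)\thickbar{*}v + N_{(G^v)^w}(v). \]

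The task is then to simplify the right-hand side to $D(G)*\{v,w\}$. First I would expand each $\thickbar{*}u = +u*u+u$ and use the commutation $(S+A)*B=(S*B)+A$ for disjoint $A,B$, together with the identities governing the (at most six) twisted duals at a single element, to push the twists $*v$ and $*w$ past the loop complementations at other elements and combine them into a single twist $*v*w = *\{v,w\}$. What remains on the right is a string of loop complementations at $v$, $w$ and the various neighbourhoods which, by a careful tracking of how $N_{G^v}(w)$ and $N_{(G^v)^w}(v)$ differ from $N_G(v)$ and $N_G(w)$, must cancel.

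The bookkeeping of which elements get loop-complemented an odd number of times is the main obstacle. A cleaner alternative that sidesteps it is to argue at the adjacency-matrix level: let $A$ be the adjacency matrix of $G$; since $v$ and $w$ are adjacent, $A[\{v,w\}] = \left(\begin{smallmatrix}0&1\\1&0\end{smallmatrix}\right)$ is nonsingular, so $\{v,w\}$ is feasible in $D(G)$ and Corollary~\ref{cor:twistrep} gives $D(G)*\{v,w\} = D(A*\{v,w\})$. One then verifies combinatorially that three successive simple local complementations at $v$, $w$, $v$ transform $A$ by exactly the principal pivot on $\{v,w\}$, i.e.\ that $A(((G^v)^w)^v) = A*\{v,w\}$, from which $D(G)*\{v,w\} = D(((G^v)^w)^v)$ follows at once.
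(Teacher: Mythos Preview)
Your opening move---three applications of Proposition~\ref{prop:localcomp} to obtain
\[
D(((G^v)^w)^v) = \Bigl(\bigl(D(G)\thickbar{*}v + N_G(v)\bigr)\thickbar{*}w + N_{G^v}(w)\Bigr)\thickbar{*}v + N_{(G^v)^w}(v)
\]
---is exactly how the paper begins, and the paper then carries out precisely the bookkeeping you flag as ``the main obstacle''.  It reorders the operations (using the commutation rules recorded before Lemma~\ref{reordering}) so that all those involving a fixed vertex $u$ occur consecutively, and then checks vertex by vertex that the composite collapses to $*\{v,w\}$.  For $u\notin\{v,w\}$ this amounts to showing that $u$ lies in an \emph{even} number of the three sets $N_G(v)$, $N_{G^v}(w)$, $N_{(G^v)^w}(v)$, which the paper does by a three-case split according to whether $u$ is a common neighbour of $v$ and $w$ in $G$, a neighbour of $v$ only, or a neighbour of $w$ only, and tracking how each class moves under $G\mapsto G^v\mapsto (G^v)^w$.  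So your first route is the paper's route; you simply stopped short of the case analysis.

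Your adjacency-matrix alternative is a genuinely different argument and is sound: once one knows that the triple local complementation along an edge coincides with the principal pivot transform, i.e.\ that the adjacency matrix of $((G^v)^w)^v$ is $A*\{v,w\}$, Corollary~\ref{cor:twistrep} finishes the proof in one line.  This trades the neighbourhood bookkeeping for a matrix identity which is standard but is not established in the paper, so the paper's proof is self-contained whereas yours imports (or would have to re-derive) that identity; on the other hand, your version makes the link to Corollary~\ref{cor:twistrep} more transparent.
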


\begin{proof}
  We have
  \[ D(((G^v)^w)^v) = ((D(G)\thickbar{*} v +N(v)) \thickbar{*} w +
  N_{G^v}(w))\thickbar{*} v + N_{(G^v)^w}(v).\]
  It follows from the discussion before Lemma~\ref{reordering} that
  one may reorder the loop complement and twist operations so that
  those involving a particular vertex of $G$ are done
  consecutively. The result follows by considering the effect of the
  operations involving each vertex of $G$ separately and noting that
  \begin{enumerate}
  \item a common neighbour of $v$ and $w$ in $G$ is a neighbour of $v$
    but not $w$ in both $G^v$ and $(G^v)^w$,
  \item a vertex other than $w$ that is a neighbour of $v$ but not $w$
    in $G$ is a neighbour of both $v$ and $w$ in $G^v$ and of $w$ but
    not $v$ in $(G^v)^w$, and
  \item a vertex other than $v$ that is a neighbour of $w$ but not $v$
    in $G$ is a neighbour of both $v$ and $w$ in $(G^v)^w$ and of $w$
    but not $v$ in $G^v$.\qedhere
  \end{enumerate}
\end{proof}

A \emph{vertex minor} of a looped simple graph $G$ is formed from $G$
by a sequence of local complementations and deletions of vertices. It
is easy to check that if $v$ and $w$ are different vertices of an
unlooped simple graph, then $(G^v)\setminus w = (G\setminus w)^v$ and
thus one may assume that all the local complementations are done
first.

\begin{figure}
\includegraphics[scale=1]{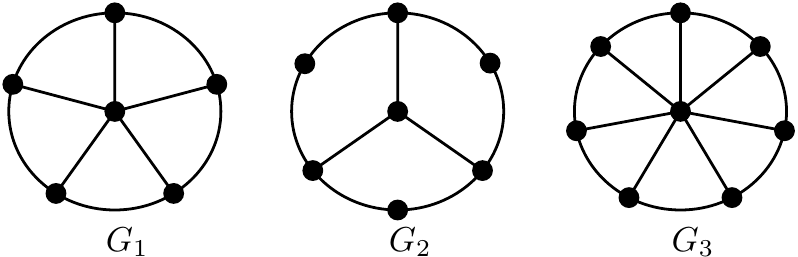}
\caption{A complete set of circle graph obstructions.}
\label{circleobst}
\end{figure}

Perhaps the most important use of vertex minors is Bouchet's
characterization of circle graphs.  A \emph{chord diagram} is a
collection of chords of a circle. Chord diagrams are in one-to-one
correspondence with orientable ribbon graphs with one vertex.  (For
more information on ribbon graphs, see~\cite{EMMbook} or~\cite{CCMR}.)
To see this think of the circle and its interior as the vertex of a
ribbon graph and for each chord attach a ribbon to the vertex at the
points corresponding to the endpoints of the chord.  Clearly two
chords intersect if and only if the corresponding ribbons $e_1$ and
$e_2$ are interlaced, that is, as one traverses the vertex one meets
an end of $e_1$, then an end of $e_2$, then the other end of $e_1$,
and finally the other end of $e_2$.  An unlooped simple graph is a
\emph{circle graph} if it is the intersection graph of the chords in a
chord diagram, that is, there is a vertex corresponding to each chord
and they are adjacent if and only if the chords cross. Equivalently a
circle graph is the interlacement graph of an orientable ribbon graph
with one vertex: it has a vertex for each ribbon and two vertices are
adjacent if the corresponding ribbons are interlaced. Bouchet
established the following result~\cite{Bouchet:circle}.

\begin{theorem}\label{Bou:famouscircle}
  An unlooped simple graph is a circle graph if and only if it has no
  vertex minor isomorphic to the graphs $G_1$, $G_2$ or $G_3$ depicted
  in Figure~\ref{circleobst}.
\end{theorem}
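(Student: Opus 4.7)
The plan is to handle the two directions separately. For necessity, the goal is to show that circle graphs are closed under vertex minors and that none of $G_1$, $G_2$, $G_3$ is itself a circle graph. Closure under vertex deletion is immediate, since removing a vertex amounts to erasing the corresponding chord from any representing chord diagram. Closure under local complementation is the classical Kotzig transformation: given the chord $c$ representing $v$, cut the circle at the two endpoints of $c$, reverse the orientation of one of the two resulting arcs, and reglue; a direct check shows that the interlacement graph of the new chord diagram is exactly $G^v$. To rule out each $G_i$ as a circle graph, I would run through the finitely many chord configurations on $|V(G_i)|$ chords and verify that none realizes its adjacency pattern; for three small obstructions this is a manageable case analysis.

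The sufficiency direction is the substantive part, and the plan is to follow Bouchet's approach via isotropic systems. Associate to each loopless simple graph $G$ on vertex set $V$ an isotropic system $S(G)$: a maximal totally isotropic subspace of an appropriate symplectic vector space over $\mathrm{GF}(2)$ indexed by $V$, recorded by choosing at each vertex three supplementary vectors corresponding to the three possible local states at that vertex in an Eulerian chord diagram. Local complementation, vertex deletion, and toggling the supplementary vectors at a vertex all lift to well-defined minor operations on isotropic systems, and the induced operations on graphs recover precisely the vertex-minor operations. A graph $G$ is a circle graph if and only if $S(G)$ is \emph{graphic}, that is, arises from an Eulerian pairing of a $4$-regular multigraph, equivalently from a chord diagram. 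The problem therefore reduces to identifying the excluded minors for the class of graphic isotropic systems.

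The final step, where the main difficulty lies, is to classify the minor-minimal non-graphic isotropic systems. I would first invoke a connectivity reduction: a minor-minimal non-graphic system must be prime, because an isotropic system that splits along a low-rank separation is graphic exactly when each of its parts is graphic. For prime obstructions, one bounds the number of vertices via a combinatorial argument on how the three possible chord realizations at a given vertex can simultaneously fail, and then enumerates the remaining possibilities. Verifying that the minimal obstructions correspond exactly to $G_1$, $G_2$, and $G_3$ is the hardest part; in Bouchet's proof this enumeration is long and relies on a uniqueness-of-chord-diagram theorem for prime circle graphs, which tightly constrains how the vertices of a candidate obstruction can interlace. Once this classification is complete, the theorem follows.
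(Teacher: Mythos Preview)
The paper does not prove this theorem. It is stated as a result of Bouchet, with a citation to \cite{Bouchet:circle}, and is then used as a black box in the proof of Theorem~\ref{thm:rg3min}. There is therefore no ``paper's own proof'' to compare your proposal against.

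That said, your outline is a reasonable high-level sketch of Bouchet's original argument: the necessity direction via the Kotzig move and a finite check that $G_1$, $G_2$, $G_3$ are not circle graphs is standard, and the sufficiency direction via isotropic systems, a reduction to prime obstructions, and an enumeration relying on uniqueness of chord diagrams for prime circle graphs is indeed the shape of Bouchet's proof. You correctly flag that the enumeration step is the substantive and lengthy part; your proposal does not actually carry it out, so what you have written is an outline rather than a proof. If the intent was to reproduce the paper's treatment, the appropriate move is simply to cite Bouchet and move on, which is what the paper does.
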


We are now ready to state the link between $3$-minors and vertex minors.

\begin{theorem}
\begin{enumerate}[leftmargin=*]
\item[\emph{(1)}] Let $G$ be a unlooped simple graph and let $H$ be a
  vertex minor of $G$. Then $D(H)$ is a $3$-minor of $D(G)$.
\item[\emph{(2)}]  Let $D$ be a twisted dual of a basic binary delta-matroid and
  let $D'$ be a $3$-minor of $D$. Then there are graphs $G$ and $G'$
  such that $D(G)$ and $D(G')$ are twisted duals of $D$ and $D'$
  respectively, and $G'$ is a vertex minor of $G$.
\end{enumerate}
\end{theorem}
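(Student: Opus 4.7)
The plan is to prove both parts by induction, using Proposition~\ref{prop:localcomp} and the corollary that follows it to translate between graph operations (local complementation, pivot, vertex deletion) and delta-matroid operations (twisted duality, deletion); Table~\ref{tab:minors} to rewrite each $3$-minor operation at an element $e$ as deletion at $e$ after a twisted dual involving only $e$; and Lemma~\ref{lem:3minorsdualsstuff} to swap the order of $3$-minors and twisted duality where needed.

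For part (1), the first step is to note that if $G'$ is a loopless simple graph and $w \in V(G')$, then the adjacency matrix of $G' \setminus w$ is the principal submatrix of the adjacency matrix of $G'$ indexed by $V(G') - w$, giving $D(G' \setminus w) = D(G') \setminus w$. For local complementation, Proposition~\ref{prop:localcomp} gives $D(G^v) = (D(G) \thickbar{*} v) + N(v)$, so $D(G^v)$ is a twisted dual of $D(G)$. Using the commutation $(G \setminus w)^v = G^v \setminus w$ for distinct $v,w$ that is recorded in the text, one may reorder the sequence defining $H$ so that all local complementations come first, writing $H = H^* \setminus W$ for some graph $H^*$ on $V(G)$ obtained from $G$ by local complementations alone and some $W \subseteq V(G)$. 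Iterating Proposition~\ref{prop:localcomp}, $D(H^*)$ is a twisted dual of $D(G)$; hence $D(H) = D(H^*) \setminus W$ is a $3$-minor of a twisted dual of $D(G)$, and applying Lemma~\ref{lem:3minorsdualsstuff} produces a $3$-minor of $D(G)$ that is a twisted dual of $D(H)$, the form intended by the ``up to twisted duality'' convention signposted at the start of the section.

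For part (2), I would induct on the length $k$ of the $3$-minor sequence from $D$ to $D'$. The base case is immediate: take $G = G'$ to be any graph with $D(G)$ a twisted dual of $D$, which exists by the hypothesis on $D$. For the inductive step, write $D' = D'' \square e$ for some $\square \in \{/, \setminus, \ddagger\}$ and invoke the inductive hypothesis to obtain graphs $G_0$ and $G''$ with $D(G_0)$ and $D(G'')$ twisted duals of $D$ and $D''$ and $G''$ a vertex minor of $G_0$. By Table~\ref{tab:minors}, each of the three $3$-minor operations at $e$ equals the deletion at $e$ of some twisted dual of $D''$ whose twists and loop complementations involve only $e$. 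The second ingredient is to realize each such twisted dual of $D(G'')$, up to a $+N(e)$ loop complementation, as $D(\tilde G)$ for a graph $\tilde G$ obtained from $G''$ by exactly one of the graph operations: doing nothing, a local complementation at $e$, or a pivot along an edge at $e$ (the latter two supplied by Proposition~\ref{prop:localcomp} and the pivot corollary). Performing this graph operation and then deleting $e$ yields a graph $G'$ that is a vertex minor of $G''$, hence of $G_0$, with $D(G')$ a twisted dual of $D'$.

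The main obstacle is the extra $+N(e)$ loop complementations in Proposition~\ref{prop:localcomp}: they have no direct graph counterpart on their own and must be absorbed into the twisted-duality equivalence between graphs and delta-matroids. A convenient way to keep the bookkeeping under control is to work, at each inductive step, with a normal representative (one with $\emptyset$ feasible) of the relevant twisted duality class: by Corollary~\ref{cor:twistrep} such a representative is automatically basic binary and hence of the form $D(H)$ for a graph $H$, and a twist by any feasible set produces one whenever it is needed.
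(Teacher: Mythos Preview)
Your approach is essentially the same as the paper's: both parts rest on Proposition~\ref{prop:localcomp}, the pivot corollary, the identity $D(G\setminus v)=D(G)\setminus v$, Table~\ref{tab:minors}, and Lemma~\ref{lem:3minorsdualsstuff}. For part~(2) the only real difference is that the paper first normalizes once---twisting $D$ by a feasible set and then applying $+B$ to kill the singletons, so as to land on an even basic binary $D(G)$---and then inducts on the \emph{first} operation in a $3$-minor sequence starting from $D(G)$, whereas you keep $D$ itself and peel off the \emph{last} operation. Both work; the paper's normalization up front is exactly the ``normal representative'' trick you describe at the end, and it slightly streamlines the bookkeeping because the graph $G$ is fixed once and for all rather than being re-chosen at each step.

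For part~(1) you are in fact more careful than the paper. The paper's one-line argument (``the result follows from Proposition~\ref{prop:localcomp} and $D(G\setminus v)=D(G)\setminus v$'') only yields that $D(H)$ is a $3$-minor of a twisted dual of $D(G)$, exactly as you write; local complementation alone already shows the literal statement cannot hold in general (e.g.\ $K_3=P_3^{\,2}$ is a vertex minor of $P_3$, but $D(K_3)$ is not equal to $D(P_3)$ and hence is not a $3$-minor of it on the same ground set). Your reading---that the conclusion is meant up to twisted duality, consistent with how part~(2) is phrased and with the ``up to twisted duality'' framing announced at the start of the section---is the correct one, and your use of Lemma~\ref{lem:3minorsdualsstuff} to pass from ``$3$-minor of a twisted dual'' to ``twisted dual of a $3$-minor'' is the right way to close the loop.
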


\begin{proof}
  For part (1), note that a vertex minor of an unlooped simple graph
  is obtained by a sequence of local complementations and vertex
  deletions. The result follows from Proposition~\ref{prop:localcomp}
  and the fact that if $v$ is a vertex of $G$ then
  $D(G\setminus v)=D(G)\setminus v$.

  For part (2), let $F$ be a feasible set of $D$ and let
  \[ B= \{e\in E(D)\,:\, \{e\} \in \mathcal {F}(D*F)\}.\]
  The remark following Corollary~\ref{cor:twistrep} implies that $D*F$
  is basic binary, so $(D*F)+B$ is an even basic binary delta-matroid,
  so $(D*F)+B=D(G)$ for some unlooped simple graph $G$. It follows
  from Lemma~\ref{lem:3minorsdualsstuff} that there is a delta-matroid
  $D''$ that is a $3$-minor of $D(G)$ and a twisted dual of $D'$.  We
  shall prove by induction on $k$ that if $G$ is an unlooped simple
  graph and $D''$ is a $3$-minor of $D(G)$ with $k$ fewer elements,
  then there is an unlooped simple graph $G'$ that is a vertex minor
  of $G$ and such that $D(G')$ is a twisted dual of $D''$. The result
  then follows.

  If $k=0$, then take $G'=G$. Otherwise $D''$ is obtained from $D(G)$
  by a sequence of $k$ deletions, contractions and Penrose
  contractions. Suppose that the first operation is the deletion of
  $v$. Then take $G''=G\setminus v$, which is a vertex minor of
  $G$. Furthermore $D(G)\setminus v=D(G'')$ and $D''$ is a $3$-minor
  of $D(G'')$ with $k-1$ fewer edges. Hence, by induction, there is an
  unlooped simple graph $G'$ that is a vertex minor of $G''$ and hence
  of $G$, and such that $D(G')$ is a twisted dual of $D''$. Suppose
  next that the first operation is the Penrose contraction of
  $v$. Then take $G''=(G^v)\setminus v$. We have
  \begin{align*}
    D(G'')&=D(G^v\setminus v) \\
          & = ((((D(G)+v)*v)+v)+N(v))\setminus v\\
          &= ((((D(G)*v)+v)*v)\setminus v)+N(v) \\
          &= (((D(G)*v)+v)/ v) + N(v)\\
          &= (D(G) \ddag v) + N(v).
  \end{align*}
  (The last equality uses Table~\ref{tab:minors}.)  Now $D(G'')$ is a
  twisted dual of $D(G)\ddag v$, so it has a $3$-minor $D'''$ with
  $k-1$ fewer elements that is a twisted dual of $D''$. Hence, by
  induction, there is an unlooped simple graph $G'$ that is a vertex
  minor of $G''$ such that $D(G')$ is a twisted dual of $D'''$ and
  consequently of $D''$. In the final case the first operation is the
  contraction of $v$. If $v$ is an isolated vertex of $G$ then $v$
  appears in no feasible set of $D(G)$ of size at most two and
  consequently in no feasible set of $D(G)$ of any size. Thus $v$ is a
  loop of $D(G)$ and $D(G)/v=D(G)\setminus v=D(G\setminus v)$. If $v$
  is not an isolated vertex of $v$ then let $w$ be a neighbour of
  $v$. We have
  \begin{align*}
    D(((G^v)^w)^v\setminus v) &= D(((G^v)^w)^v)\setminus v\\
                              &= (D(G)*\{v,w\})\setminus v\\
                              & = (D(G)/v)*w.
  \end{align*} The
  proof of this case is completed in a similar way to the case of
  Penrose contraction.
\end{proof}

Having described the equivalence of $3$-minors and vertex minors, we
note how the equivalence may be seen from existing results involving
binary delta-matroids and other combinatorial structures, namely,
isotropic systems, binary tight 3-matroids (both defined by Bouchet
in~\cite{ab:iso1} and~\cite{mmi}, respectively) and isotropic matroids
(defined by Traldi in~\cite{Traldi:binloc}). The equivalence of
$3$-minors of binary delta-matroids and minors of binary tight
$3$-matroids is explained in \cite{B+H:interlace}. The equivalence of
minors of binary tight $3$-matroids, isotropic matroids and isotropic
systems is explained in~\cite{BT}. Finally vertex minors of graphs are
equivalent to minors of isotropic matroids by~\cite{Traldi:binloc} or
to minors of isotropic systems by~\cite{ab:iso2}.

From the preceding result we obtain the following translation of
Bouchet's result, determining the three binary delta-matroids that are
the excluded $3$-minors for ribbon-graphic delta-matroids. In
\cite{Bouchet:circle}, Bouchet points out that his result may be
stated in terms of minors of isotropic systems, which informally are
much closer to $3$-minors of binary delta-matroids than to
vertex-minors of graphs.

\begin{theorem}\label{thm:rg3min}
  A binary delta-matroid is ribbon-graphic if and only if it has no
  $3$-minor that is a twisted dual of $D(G_1)$, $D(G_2)$ or $D(G_3)$.
\end{theorem}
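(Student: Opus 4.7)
The plan is to deduce this theorem from Bouchet's vertex-minor characterization of circle graphs (Theorem \ref{Bou:famouscircle}) via the equivalence between $3$-minors and vertex minors established in the preceding theorem. I would lean on three facts that I will use freely. First, the class of ribbon-graphic delta-matroids is closed under twisted duality (by the introduction) and under ordinary minors, and hence under Penrose contraction since $S\ddag e=(S+e)/e$; therefore the class is closed under $3$-minors. Second, for an unlooped simple graph $G$, the basic binary delta-matroid $D(G)$ is ribbon-graphic if and only if $G$ is a circle graph: the ``if'' direction is witnessed by the one-vertex orientable ribbon graph arising from a chord diagram whose interlacement graph is $G$, and the ``only if'' direction follows from the fact that partial duality (a twisted-duality operation) reduces any ribbon graph to a one-vertex one, whose delta-matroid is exactly $D(G)$ for its interlacement graph. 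Third, every binary delta-matroid $D$ admits a twisted dual of the form $D(G)$ with $G$ unlooped simple: this is precisely the reduction carried out in the opening lines of the proof of part~(2) of the preceding theorem (choose a feasible set $F$, twist on $F$, and then loop complement on the singleton feasible sets).

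For the forward direction I would suppose that $D$ is ribbon-graphic. By the first fact, every $3$-minor of every twisted dual of $D$ is again ribbon-graphic. However, $G_1$, $G_2$, and $G_3$ are not circle graphs, since they are the excluded vertex minors in Theorem \ref{Bou:famouscircle}, so by the second fact none of $D(G_1)$, $D(G_2)$, $D(G_3)$ is ribbon-graphic. Consequently no $3$-minor of $D$ can be a twisted dual of any $D(G_i)$.

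For the reverse direction I would argue contrapositively. Suppose $D$ is not ribbon-graphic, and use the third fact to choose an unlooped simple graph $G$ with $D(G)$ a twisted dual of $D$. By the first fact, $D(G)$ is not ribbon-graphic either, so by the second fact $G$ is not a circle graph. Bouchet's theorem (Theorem \ref{Bou:famouscircle}) then supplies a vertex minor $H$ of $G$ isomorphic to $G_1$, $G_2$, or $G_3$. By part~(1) of the equivalence theorem, $D(H)$ is a $3$-minor of $D(G)$. Since $D(G)$ is a twisted dual of $D$, Lemma \ref{lem:3minorsdualsstuff} produces a $3$-minor of $D$ that is a twisted dual of $D(H)\cong D(G_i)$, as required.

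The only conceptually nontrivial ingredient is the second fact above, the identification of ribbon-graphicness of $D(G)$ with $G$ being a circle graph; everything else is bookkeeping with twisted duality and the vertex-minor/$3$-minor dictionary. That identification rests on two pieces the paper has already placed in view: the bijection between chord diagrams and one-vertex orientable ribbon graphs (so that circle graphs are exactly the delta-matroid graphs arising from such ribbon graphs), and the closure of the ribbon-graphic class under twisted duality, which is what lets one reduce a general ribbon graph to a one-vertex one via partial duality without leaving the class.
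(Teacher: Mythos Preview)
Your proof is correct and follows essentially the same route as the paper: reduce $D$ via twisted duality to $D(G)$ for an unlooped simple graph $G$, invoke Bouchet's circle-graph theorem, and translate between vertex minors and $3$-minors using the preceding equivalence theorem together with Lemma~\ref{lem:3minorsdualsstuff}. Your write-up is in fact a bit more explicit than the paper's on two points---you spell out the forward direction separately, and your ``third fact'' carries out the full twist-then-loop-complement reduction to an unlooped $G$, whereas the paper's proof applies loop complementation on $B$ directly to $D$ without first twisting to make it normal.
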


\begin{proof}
  If $D$ is a binary delta-matroid and $v$ is an element of $D$ then
  $D$ is ribbon-graphic if and only if $D+v$ is ribbon graphic,
  because it follows from Theorem 4.1 of~\cite{CCMR} that if $R$ is a
  ribbon graph with $D=D(R)$ then $D+v$ is the delta-matroid
  corresponding to the ribbon graph formed from $R$ by applying a
  half-twist to $v$. Let
  \[ B= \{e\in E(D): \{e\} \in \mathcal {F}(D)\}.\]
  Then $D+B$ is even and, furthermore, $D+B$ is ribbon-graphic if and
  only if $D$ is ribbon-graphic.  Now $D+B=D(G)$ where $G$ is an
  unlooped simple graph. Bouchet's Theorem~\ref{Bou:famouscircle}
  states that $G$ is a circle graph if and only if $G$ has no vertex
  minor isomorphic to $G_1$, $G_2$ or $G_3$. Equivalently $D+B$ is
  ribbon-graphic if and only if it has no $3$-minor that is a twisted
  dual of $D(G_1)$, $D(G_2)$ or $D(G_3)$. As $D+B$ is a twisted dual
  of $D$, the result follows.
\end{proof}

We close by presenting excluded $3$-minor results for the classes of
binary delta-matroids and ribbon graphic delta-matroids that follow
from Theorem~\ref{thm:excl3minvfs}.  Bouchet~\cite{bouchet:binary}
proved that every minor of a binary delta-matroid is binary and gave
the following excluded-minor characterization of binary
delta-matroids.

\begin{theorem}\label{thm:bouchetbinary}
  A delta-matroid is binary if and only if it does not have a minor
  isomorphic to any of the following five delta-matroids or their
  twists.
  \begin{enumerate}
  \item[\emph{(1)}]
    $B_1=(\{a,b,c\},\{\emptyset,\{a,b\},\{a,c\},\{b,c\},\{a,b,c\}\})$;
  \item[\emph{(2)}] $B_2=S_3+\{a,b,c\}$;
  \item[\emph{(3)}]
    $B_3=(\{a,b,c\},\{\emptyset,\{b\},\{c\},\{a,b\},\{a,c\},\{a,b,c\}\})$;
  \item[\emph{(4)}]
    $B_4=(\{a,b,c,d\},\{\emptyset,\{a,b\},\{a,c\},\{a,d\},\{b,c\},\{b,d\},\{c,d\}\})$;
  \item[\emph{(5)}]
    $B_5=(\{a,b,c,d\},\{\emptyset,\{a,b\},\{a,d\},\{b,c\},\{c,d\},\{a,b,c,d\}\})$.
  \end{enumerate}
\end{theorem}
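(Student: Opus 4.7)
The theorem is attributed to Bouchet, so my plan is to sketch how a proof from scratch would go, following the standard strategy for excluded-minor characterizations. The argument splits into the usual three parts: (a) binary delta-matroids are minor-closed, (b) none of $B_1,\ldots,B_5$ or their twists is binary, and (c) any delta-matroid with no such minor is binary.

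For (a), I would verify directly from the definition $D(A) = (E,\mathcal{F})$ that deletion and contraction correspond to ``removing a row and column'' from a representing symmetric binary matrix $A$, using the standard fact that if $A[X]$ is non-singular then the Schur-complement-style reduction shows non-singularity of principal submatrices is preserved under the appropriate operation after a twist. Combined with Corollary~\ref{cor:twistrep}, this gives closure under minors.

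For (b), since each $B_i$ has ground set of size $3$ or $4$, I would carry out a finite case check. For each $B_i$, I would pick a feasible set $F$ and twist by it so the empty set becomes feasible; if $B_i*F$ were basic binary, then the size-one and size-two feasible sets of $B_i*F$ would uniquely determine a candidate symmetric binary matrix $A$, and I would then exhibit a specific subset $Y$ for which feasibility in $B_i*F$ disagrees with non-singularity of $A[Y]$. Since being binary is preserved under twists, this rules out all twists simultaneously.

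For (c), the main direction, I would induct on $|E(D)|$. Pick a feasible set $F$ and replace $D$ by $D*F$, so the empty set is feasible; by Corollary~\ref{cor:twistrep} and closure under twists, it suffices to show this normal delta-matroid is basic binary. From the singletons and doubletons in $\mathcal{F}(D*F)$ build the unique candidate symmetric binary matrix $A$: put a $1$ on the diagonal at $e$ iff $\{e\}\in\mathcal{F}$, and a $1$ in positions $(e,f),(f,e)$ iff $\{e,f\}\in\mathcal{F}$ and $\{e\},\{f\}\notin\mathcal{F}$, with the remaining case dictated by making the $2\times 2$ principal submatrix non-singular exactly when $\{e,f\}\in\mathcal{F}$. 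By induction, every proper minor of $D*F$ is basic binary, so matches the corresponding submatrix of $A$. It then remains to show $S \in \mathcal{F}(D*F) \iff A[S]$ is non-singular for $|S|\geq 3$. A discrepancy on a minimal $S$ gives a $3$- or $4$-element minor in which the symmetric exchange axiom, combined with the induction hypothesis, forces the feasible-set family to be one of $B_1,\ldots,B_5$ or a twist thereof.

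The main obstacle is precisely this last step: controlling the discrepancy minors. The delicate point is that $B_4$ and $B_5$ live on four elements, so one cannot restrict attention to $3$-element subsets; one needs to show that if every $3$-element restriction looks binary but some $4$-element restriction is not, then the $4$-element minor must be isomorphic (up to twist) to $B_4$ or $B_5$. This requires a careful enumeration of the possible feasible-set families on four elements that are delta-matroids, have all proper restrictions binary, but fail to equal $D(A[\,\cdot\,])$, and checking that the list collapses to $\{B_4,B_5\}$ up to twists.
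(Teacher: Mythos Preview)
The paper does not prove Theorem~\ref{thm:bouchetbinary}; it is quoted as Bouchet's result and cited to \cite{bouchet:binary}. So there is no proof in the paper to compare your proposal against.

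On the substance of your sketch: parts (a) and (b) are routine and your plan for them is fine. In part (c), however, there is a genuine gap. Your induction, via deletions, correctly shows that the candidate matrix $A$ built from the singleton and doubleton feasible sets satisfies $S\in\mathcal{F}(D*F)\iff A[S]$ non-singular for every \emph{proper} subset $S\subsetneq E$ (because $D*F\setminus e$ is basic binary by induction, and a basic binary delta-matroid is determined by its feasible sets of size at most two, hence equals $D(A[E-e])$). So the only possible discrepancy is at $S=E$ itself. You then assert that ``a discrepancy on a minimal $S$ gives a $3$- or $4$-element minor'' and proceed to the enumeration---but you have not explained why $|E|\le 4$. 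Bounding the size of an excluded minor is exactly the heart of Bouchet's argument, and deletions alone cannot do it: one must also exploit that every \emph{contraction} $D*F/e$ is binary, which (after re-normalizing by twisting on a feasible set containing $e$) imposes further constraints linking the top feasible set to smaller ones. Only after combining these constraints with the symmetric exchange axiom can one force a small excluded minor. Your final paragraph acknowledges the four-element enumeration is delicate, but the step preceding it---why the problem reduces to three or four elements at all---is the real obstacle, and it is missing.
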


We obtain corollaries of this result characterizing binary
delta-matroids in terms excluded $3$-minors. The first result is
equivalent to a recent result of Brijder, stated in terms of binary
tight $3$-matroids~\cite{Brid:orient}.

\begin{corollary}
  A vf-safe delta-matroid is binary if and only if it has no $3$-minor
  that is a twisted dual of $B_1$.
\end{corollary}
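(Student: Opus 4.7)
The plan is to combine Bouchet's characterization of binary delta-matroids (Theorem~\ref{thm:bouchetbinary}) with the $3$-minor machinery from the earlier sections, together with a short case analysis.

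For the forward direction, suppose $D$ is binary and vf-safe. By~\cite[Theorem~8.2]{B+H:nullity}, the class of binary delta-matroids is closed under twisted duality, so in particular under loop complementation; combined with the closure under deletion and contraction shown in~\cite{bouchet:binary}, this gives closure under Penrose contraction and hence under all $3$-minors. Since $B_1$ is not binary by Theorem~\ref{thm:bouchetbinary} and binariness is invariant under twisted duality, no twisted dual of $B_1$ is binary. Hence no $3$-minor of $D$ can be a twisted dual of $B_1$.

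For the reverse direction, suppose $D$ is vf-safe and not binary. By Theorem~\ref{thm:bouchetbinary}, $D$ has a minor $D'$ isomorphic to a twist of $B_i$ for some $i\in\{1,\ldots,5\}$, and $D'$ is vf-safe by~\cite[Theorem~8.3]{B+H:nullity}. The first step is to rule out $i=2$ and $i=4$: we have $B_2+\{a,b,c\}=S_3$ directly by construction, and a short computation gives $B_4\ddagger d=D_3$ (which in fact coincides with $B_2$), so that $D_3+\{a,b,c\}=S_3$. Since $S_3\in\mathcal{S}$ is not a delta-matroid, $B_2$ is not vf-safe, and by Lemma~\ref{lem:pcclosed} neither is $B_4$. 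Therefore $D'$ must be a twist of $B_i$ for some $i\in\{1,3,5\}$.

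It then remains to exhibit, for each of $B_1$, $B_3$, $B_5$, a $3$-minor that is a twisted dual of $B_1$. For $B_1$ the claim is trivial. For $B_3$, a relabeling verifies $B_3\simeq B_1+\{a,b\}$, so $B_3$ is already a twisted dual of $B_1$. For $B_5$, I would compute $B_5\ddagger a=(B_5+a)/a$ and observe that, after relabeling, it is isomorphic to $B_3$, and therefore to a twisted dual of $B_1$. Applying Lemma~\ref{lem:3minorsdualsstuff} then transfers the conclusion from $B_i$ to the twist $D'$, producing a $3$-minor of $D$ that is a twisted dual of $B_1$ and completing the contrapositive.

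The main obstacle is organizing the small but delicate feasible-set calculations. In particular, one must recognize that $B_4\ddagger d=D_3$ so that $B_4$ is ruled out by propagating failure of vf-safety through its Penrose contraction (via Lemma~\ref{lem:pcclosed}), rather than by directly producing a non-delta-matroid twisted dual of $B_4$.
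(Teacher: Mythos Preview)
Your proposal is correct and follows essentially the same approach as the paper's proof: rule out $B_2$ and $B_4$ via failure of vf-safety (using $B_4\ddagger d=B_2=D_3$ and Lemma~\ref{lem:pcclosed}), then show $B_3$ is a twisted dual of $B_1$ and that $B_5$ Penrose-contracts to $B_3$. The only differences are cosmetic---the paper uses $(B_3+a)^*=B_1$ and $B_5\ddagger d\simeq B_3$ rather than your $B_3\simeq B_1+\{a,b\}$ and $B_5\ddagger a\simeq B_3$---and these are equivalent by the symmetries of the set systems involved.
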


\begin{proof}
  Theorem~8.2 of~\cite{B+H:nullity} states that every twisted dual of
  a binary delta-matroid is a binary delta-matroid. In particular
  every binary delta-matroid is vf-safe.  Moreover, every $3$-minor of
  a binary delta-matroid is binary.  The delta-matroid $B_1$ is
  vf-safe and all of its $3$-minors are binary. Thus all of its
  twisted duals are excluded $3$-minors for the class of binary
  delta-matroids.

  Suppose that $D$ is a vf-safe delta-matroid that is not binary. Then
  Theorem~\ref{thm:bouchetbinary} implies that $D$ has a minor
  isomorphic to a twist of $B_i$ for $1\leq i \leq 5$. The
  delta-matroid $B_2$ is not vf-safe and $B_4 \ddag d=B_2$, so $D$ has
  no minor isomorphic to a twist of $B_2$ or of $B_4$. Furthermore
  $(B_3+a)^*=B_1$, and $B_5\ddag d \simeq B_3$. Thus $D$ has a
  $3$-minor that is isomorphic to a twisted dual of $B_1$.
\end{proof}

By combining this result with Theorem~\ref{thm:excl3minvfs}, we obtain
the following.

\begin{corollary}
  A proper set system is a binary delta-matroid if and only if it has
  no $3$-minor that is a twisted dual of $B_1$ or $S_3$.
\end{corollary}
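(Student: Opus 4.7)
The proof will be essentially a direct combination of Theorem~\ref{thm:excl3minvfs} with the preceding corollary, using that the class of binary delta-matroids is both 3-minor closed and contained in the class of vf-safe delta-matroids. The plan is to handle each direction of the biconditional separately, checking carefully that each excluded 3-minor is indeed excluded in the full class of proper set systems (not merely within vf-safe delta-matroids).

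For the forward direction, suppose $S$ is a binary delta-matroid. By Theorem~8.2 of~\cite{B+H:nullity}, every twisted dual of $S$ is binary, hence a delta-matroid, so $S$ is vf-safe. Theorem~\ref{thm:excl3minvfs} then rules out any 3-minor of $S$ being a twisted dual of $S_3$. The preceding corollary, applied to the vf-safe delta-matroid $S$, additionally rules out any 3-minor of $S$ being a twisted dual of $B_1$. It will be important to note here that $B_1$ itself is not a twisted dual of $S_3$ (for instance because $B_1$ is a delta-matroid on three elements while every twisted dual of $S_3$ fails to be a delta-matroid), so the two families of excluded 3-minors are genuinely distinct.

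For the converse, suppose $S$ is a proper set system with no 3-minor that is a twisted dual of $B_1$ or of $S_3$. Since $S$ has no 3-minor that is a twisted dual of $S_3$, Theorem~\ref{thm:excl3minvfs} yields that $S$ is a vf-safe delta-matroid. Since $S$ also has no 3-minor that is a twisted dual of $B_1$, the preceding corollary then gives that $S$ is binary.

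There is no real obstacle: the whole argument is a two-step invocation of already-proved results, with the only delicate point being the order of application. One must first establish that $S$ is a vf-safe delta-matroid (so that the preceding corollary, whose hypothesis is vf-safeness, becomes applicable) before applying the $B_1$-obstruction. Once this is observed, the proof is essentially immediate and the statement reads as a clean consolidation of the two excluded-3-minor characterizations.
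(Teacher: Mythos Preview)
Your proof is correct and takes the same approach as the paper, which simply records that the corollary follows by combining Theorem~\ref{thm:excl3minvfs} with the preceding corollary. One small correction: your parenthetical claim that ``every twisted dual of $S_3$ fails to be a delta-matroid'' is false---for instance $D_3 = S_3+\{a,b,c\}$ is a delta-matroid---but this aside plays no role in the argument, and the observation that $B_1$ is not a twisted dual of $S_3$ (which follows instead from $B_1$ being vf-safe) is likewise unnecessary for either direction to go through.
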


Finally we combine the last two results with Theorem~\ref{thm:rg3min}.

\begin{corollary}
  A proper set system is a ribbon graphic delta-matroid if and only if
  it has no $3$-minor that is a twisted dual of $B_1$, $S_3$,
  $D(G_1)$, $D(G_2)$ or $D(G_3)$.
\end{corollary}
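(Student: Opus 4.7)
The plan is to prove this corollary by chaining together the three excluded $3$-minor characterizations established earlier, exploiting the containment ribbon-graphic $\subset$ binary $\subset$ vf-safe $\subset$ delta-matroids. Since the class of ribbon-graphic delta-matroids is closed under twisted duality (by \cite[Theorem~2.1,Theorem~4.1]{CCMR}) and under $3$-minors (since contractions, deletions, and loop complementations correspond to ribbon-graph operations, so Penrose contraction does too), one direction is routine: if $S$ is a ribbon-graphic delta-matroid, then so are all of its $3$-minors, and none of $B_1$, $S_3$, $D(G_1)$, $D(G_2)$, $D(G_3)$ is ribbon-graphic (the first two are not even delta-matroids or not binary, and the latter three fail Theorem~\ref{thm:rg3min}).

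For the converse, suppose $S$ is a proper set system with no $3$-minor that is a twisted dual of $B_1$, $S_3$, $D(G_1)$, $D(G_2)$, or $D(G_3)$. First I would apply Theorem~\ref{thm:excl3minvfs}: since $S$ has no $3$-minor that is a twisted dual of $S_3$, $S$ is a vf-safe delta-matroid. Next I would apply the preceding corollary (characterizing binary delta-matroids within the vf-safe class): since $S$ is vf-safe and has no $3$-minor that is a twisted dual of $B_1$, $S$ is binary. Finally I would apply Theorem~\ref{thm:rg3min}: since $S$ is binary and has no $3$-minor that is a twisted dual of $D(G_1)$, $D(G_2)$, or $D(G_3)$, $S$ is ribbon-graphic.

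There is essentially no obstacle here beyond correctly invoking the three results in sequence; the work has been done already. The only minor subtlety worth spelling out is that after using Theorem~\ref{thm:excl3minvfs} to promote $S$ from a proper set system to a vf-safe delta-matroid, the hypothesis about the remaining four forbidden $3$-minors is still available, because the hypothesis is on $S$ itself and the successive applications only strengthen the structural conclusion about $S$ without touching the $3$-minor condition. Thus the proof reduces to a one-paragraph concatenation of Theorem~\ref{thm:excl3minvfs}, the preceding corollary, and Theorem~\ref{thm:rg3min}.
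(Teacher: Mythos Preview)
Your proposal is correct and matches the paper's approach: the paper does not give a detailed proof but simply states that one should combine the two preceding corollaries with Theorem~\ref{thm:rg3min}, which is exactly the chain you describe. Your write-up is more explicit about the forward direction and the sequencing, but the substance is identical.
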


\section{Appendix:  The twisted duals of $S_3$}

As proved in Theorem~\ref{thm:excl3minvfs}, these twisted duals of
$S_3$ are the excluded $3$-minors for vf-safe delta-matroids.

\begin{table}[h]
\begin{center}
\begin{tabular}{r|cccc|l}
  \cline{2-5}
  $S_3$ & $\emptyset$ &  &  & $\{a,b,c\}$ &\rule{0pt}{10pt}  \\
    \cline{2-5}
   \multicolumn{6}{c}{}\\[-0.7em]
 \cline{2-5}
   $S_3*\{a\}$      & & $\{a\}$ & $\{b,c\}$ & \rule{0pt}{10pt}   \\
 \cline{2-5}
 \multicolumn{6}{c}{}\\[-0.7em]
\end{tabular}
\end{center}
\caption{All twists of $S_3$ up to isomorphism.}\label{t1}
\end{table}

\begin{table}[h]
\begin{center}
\begin{tabular}{|cccc|cccc|}
  \cline{1-8}
 $\emptyset$ & $\{a\}$ &  & $\{a,b,c\}$
  & $\emptyset$ &  & $\{b,c\}$ & $\{a,b,c\}$\rule{0pt}{10pt}  \\
  \cline{1-8}
   \multicolumn{4}{c}{$S_3+\{a\}$}
             &\multicolumn{4}{c}{$(S_3+\{a\})^*$}\rule[-4pt]{0pt}{16pt}
  \\
   \multicolumn{8}{c}{}\\[-0.7em]
  \cline{1-8}
 $\emptyset$ & $\{a\}$ & $\{b,c\}$ &
  &  & $\{a\}$ & $\{b,c\}$ & $\{a,b,c\}$\rule{0pt}{10pt}  \\
  \cline{1-8}
  \multicolumn{4}{c}{  $(S_3+\{a\})*\{a\}$}
             &\multicolumn{4}{c}{$(S_3+\{a\})*\{b,c\}$}\rule[-4pt]{0pt}{16pt}
  \\
 \multicolumn{8}{c}{}\\[-0.7em]
   \cline{1-8}
 & \multirow{2}{*}{$\{b\}$} & $\{a,b\}$ &  &
    & $\{b\}$ & \multirow{2}{*}{$\{a,c\}$} &
  \\
 &  &  $\{a,c\}$ & & &$\{c\}$ &   &  \\ 
  \cline{1-8}
  \multicolumn{4}{c}{$(S_3+\{a\})*\{b\}$}  &\multicolumn{4}{c}{$(S_3+\{a\})*\{a,c\}$}\rule[-4pt]{0pt}{16pt} \\
 \multicolumn{8}{c}{}\\[-0.7em]
\end{tabular}
\end{center}
\caption{All twists of $S_3+\{a\}$ up to isomorphism.  Dual pairs
  are side by side.}\label{t2}
\end{table}

\begin{table}[h]
\begin{center}
\begin{tabular}{|cccc|cccc|}
  \cline{1-8}
 \multirow{2}{*}{$\emptyset$} & $\{a\}$ & \multirow{2}{*}{$\{a,b\}$} & \multirow{2}{*}{$\{a,b,c\}$}   &
  \multirow{2}{*}{$\emptyset$} & \multirow{2}{*}{$\{c\}$} & $\{a,c\}$ & \multirow{2}{*}{$\{a,b,c\}$}\rule{0pt}{10pt}\\
& $\{b\}$ &&&&& $\{b,c\}$&\\
    \cline{1-8}
   \multicolumn{4}{c}{$S_3+\{a,b\}$}  &\multicolumn{4}{c}{$(S_3+\{a,b\})^*$}\rule[-4pt]{0pt}{16pt} \\
 \multicolumn{8}{c}{}\\[-0.7em]
  \cline{1-8}
 \multirow{2}{*}{$\emptyset$} & $\{a\}$ & $\{a,b\}$ & &
 & $\{a\}$ & $\{a,c\}$ & \multirow{2}{*}{$\{a,b,c\}$}\rule{0pt}{10pt}  \\
& $\{b\}$ &$\{b,c\}$&&&$\{c\}$& $\{b,c\}$&\\
    \cline{1-8}
      \multicolumn{4}{c}{$(S_3+\{a,b\})*\{a\}$}  &\multicolumn{4}{c}{$(S_3+\{a,b\})*\{b,c\}$}\rule[-4pt]{0pt}{16pt} \\
 \multicolumn{8}{c}{}\\[-0.7em]

  \cline{1-8}
&&$\{a,b\}$&&&$\{a\}$&&\\
 & $\{c\}$ & $\{a,c\}$ & $\{a,b,c\}$  &
  $\emptyset$ & $\{b\}$ & $\{a,b\}$ &   \\
&&$\{b,c\}$&&&$\{c\}$&&\\
    \cline{1-8}
      \multicolumn{4}{c}{$(S_3+\{a,b\})*\{c\}$}  &\multicolumn{4}{c}{$(S_3+\{a,b\})*\{a,b\}$}\rule[-4pt]{0pt}{16pt} \\
 \multicolumn{8}{c}{}\\[-0.7em]
\end{tabular}
\end{center}
\caption{All twists of $S_3+\{a,b\}$ up to isomorphism.  Dual pairs
  are side by side.}\label{t3}
\end{table}

\begin{table}[h]
\begin{center}
\begin{tabular}{|cccc|cccc|}
\cline{1-8}
&$\{a\}$&$\{a,b\}$&&&$\{a\}$&$\{a,b\}$&\\
 $\emptyset$ & $\{b\}$ & $\{a,c\}$ &  &
   & $\{b\}$ & $\{a,c\}$ & $\{a,b,c\}$   \\
&$\{c\}$&$\{b,c\}$&&&$\{c\}$&$\{b,c\}$&\\
    \cline{1-8}
      \multicolumn{4}{c}{  $S_3+\{a,b,c\}$}  &\multicolumn{4}{c}{$(S_3+\{a,b,c\})^*$}\rule[-4pt]{0pt}{16pt} \\
 \multicolumn{8}{c}{}\\[-0.7em]
  \cline{1-8}
&$\{a\}$&\multirow{2}{*}{$\{a,b\}$}&&&\multirow{2}{*}{$\{b\}$}&$\{a,b\}$&\\
  $\emptyset$&$\{b\}$&\multirow{2}{*}{$\{a,c\}$}&$\{a,b,c\}$&
  $\emptyset$&\multirow{2}{*}{$\{c\}$}&$\{a,c\}$&$\{a,b,c\}$  \\
&$\{c\}$&&&&&$\{b,c\}$&\\
    \cline{1-8}
      \multicolumn{4}{c}{$S_3+\{a,b,c\}*\{a\}$ }  &\multicolumn{4}{c}{$S_3+\{a,b,c\}*\{b,c\}$}\rule[-4pt]{0pt}{16pt}\\
 \multicolumn{8}{c}{}\\[-0.7em]
\end{tabular}
\end{center}
\caption{All twists of $S_3+\{a,b,c\}$ up to isomorphism.  Dual pairs
  are side by side.}\label{t4}
\end{table}

\begin{table}[!h]
\begin{center}
\begin{tabular}{|cccc|cccc|}
  \cline{1-8}
           &\multirow{2}{*}{$\{a\}$}&$\{a,b\}$&\multirow{2}{*}{$\{a,b,c\}$}&
  \multirow{2}{*}{$\emptyset$}&$\{a\}$&\multirow{2}{*}{$\{b,c\}$}&  \\
&&$\{b,c\}$&&&$\{c\}$&&\\
  \cline{1-8}
      \multicolumn{4}{c}{ $(S_3*\{a\})+\{a,b\}$  }  &\multicolumn{4}{c}{$((S_3*\{a\})+\{a,b\})^*$}\rule[-4pt]{0pt}{16pt}\\
 \multicolumn{8}{c}{}\\[-0.7em]
\end{tabular}\\
\begin{tabular}{|cccc|}
  \hline
  $\emptyset$&$\{b\}$&$\{b,c\}$&$\{a,b,c\}$\\
    \hline
 \multicolumn{4}{c}{$((S_3*\{a\})+\{a,b\})*\{a\}$}\rule[-4pt]{0pt}{16pt}\\
 \multicolumn{4}{c}{}\\[-0.7em]
  \hline
  &$\{a\}$&$\{a,b\}$&\\&$\{c\}$&$\{a,c\}$&\\
    \hline
 \multicolumn{4}{c}{$((S_3*\{a\})+\{a,b\})*\{b\}$}\rule[-4pt]{0pt}{16pt}\\
 \multicolumn{4}{c}{}\\[-0.7em]
\end{tabular}
\end{center}
\caption{All twists of $(S_3*\{a\})+\{a,b\}$ up to isomorphism.  Dual pairs
  are side by side.}\label{t5}
\end{table}

\begin{table}[h]
\begin{center}
\begin{tabular}{|cccc|cccc|}
  \cline{1-8}
  &&$\{a,b\}$&&&$\{a\}$&&\\
  &$\{a\}$&$\{a,c\}$&&&$\{b\}$&$\{b,c\}$&  \\
&&$\{b,c\}$&&&$\{c\}$&&\\
    \cline{1-8}
      \multicolumn{4}{c}{   $(S_3*\{a\})+\{a,b,c\}$ }  &\multicolumn{4}{c}{$((S_3*\{a\})+\{a,b,c\})^*$}\rule[-4pt]{0pt}{16pt}\\
 \multicolumn{8}{c}{}\\[-0.7em]
  \cline{1-8}
        \multirow{2}{*}{$\emptyset$}&$\{b\}$&&\multirow{2}{*}{$\{a,b,c\}$}&
\multirow{2}{*}{$\emptyset$}&&$\{a,b\}$&\multirow{2}{*}{$\{a,b,c\}$}\\
&$\{c\}$&&&&&$\{a,c\}$&\\
    \cline{1-8}
      \multicolumn{4}{c}{$((S_3*\{a\})+\{a,b,c\})*\{a\}$}  &\multicolumn{4}{c}{$((S_3*\{a\})+\{a,b,c\})*\{b,c\}$}\rule[-4pt]{0pt}{16pt}\\
 \multicolumn{8}{c}{}\\[-0.7em]
  \cline{1-8}

&$\{a\}$&\multirow{2}{*}{$\{a,b\}$}&\multirow{2}{*}{$\{a,b,c\}$}&
\multirow{2}{*}{$\emptyset$}&\multirow{2}{*}{$\{c\}$}&$\{a,b\}$&\\

&$\{c\}$&&&&&$\{b,c\}$&\\
    \cline{1-8}
      \multicolumn{4}{c}{$((S_3*\{a\})+\{a,b,c\})*\{b\}$}  &\multicolumn{4}{c}{$((S_3*\{a\})+\{a,b,c\})*\{a,c\}$}\rule[-4pt]{0pt}{16pt}\\
 \multicolumn{8}{c}{}\\[-0.7em]
\end{tabular}
\end{center}
\caption{All twists of $(S_3*\{a\})+\{a,b,c\}$ up to isomorphism.  Dual pairs
  are side by side.}\label{t6}
\end{table}

\clearpage

\end{document}